\theoremstyle{plain}
\newtheorem{theorem}{Theorem}
\newtheorem{lemma}{Lemma}
\newtheorem{proposition}[lemma]{Proposition}
\theoremstyle{definition}
\newtheorem{definition}[lemma]{Definition}
\theoremstyle{remark}
\newtheorem{example}[lemma]{Example}
\newtheorem{remark}[lemma]{Remark}
\def\defeq{:=}
\def\eqdef{=:}
\def\Xint#1{\mathchoice
{\XXint\displaystyle\textstyle{#1}}%
{\XXint\textstyle\scriptstyle{#1}}%
{\XXint\scriptstyle\scriptscriptstyle{#1}}%
{\XXint\scriptscriptstyle\scriptscriptstyle{#1}}%
\!\int}
\def\XXint#1#2#3{{\setbox0=\hbox{$#1{#2#3}{\int}$}
\vcenter{\hbox{$#2#3$}}\kern-.5\wd0}}
\def\dashint{\Xint-}
\newcommand{\avgint}{\dashint}
\newcommand{\topref}[2]{\overset{\text{\eqref{#1}}}{#2}}
\newcommand{\subeq}[2]{\mathord{\underbrace{\mathop{#1}}_{#2}}}
\newcommand{\supeq}[2]{\mathord{\overbrace{\mathop{#1}}^{#2}}}
\newcommand{\esssup}{\operatorname{esssup}}
\newcommand{\eqv}{\Leftrightarrow}
\newcommand{\impl}{\Rightarrow}
\newcommand{\selset}[2]{\{#1~:~#2\}}
\newcommand{\set}[1]{\{#1\}}
\newcommand{\R}{\mathds{R}}
\newcommand{\Rminus}{\R_-}
\newcommand{\boi}[2]{{]#1,#2[}}
\newcommand{\loi}[2]{{]#1,#2]}}
\newcommand{\roi}[2]{{[#1,#2[}}
\newcommand{\cli}[2]{{[#1,#2]}}
\newcommand{\pd}[1]{\partial_{#1}}
\newcommand{\pt}{\partial_t}
\newcommand{\ndiv}{\nabla\dotp}
\newcommand{\ncurl}{\nabla\crossp}
\newcommand{\nperp}{\nabla^\perp}
\newcommand{\tensor}{\otimes}
\newcommand{\hess}{\nabla^2}
\newcommand{\trace}{\operatorname{tr}}
\newcommand{\vlen}[1]{|#1|}
\newcommand{\half}{\frac12}
\newcommand{\dotp}{\cdot}
\newcommand{\crossp}{\times}
\newcommand{\csep}{\quad,\quad}
\newcommand{\Lap}{\Delta}
\newcommand{\Leb}{\mathcal L}
\newcommand{\Leba}[1]{\Leb^{#1}}
\newcommand{\Linf}{\Leba\infty}
\newcommand{\Lone}{\Leba1}
\newcommand{\defm}[1]{\emph{#1}}
\newcommand{\vv}{\vec v}
\newcommand{\graph}{\operatorname{graph}}
\newcommand{\const}{\text{const}}
\newcommand{\woba}[2]{\mathcal{W}^{#1,#2}} 
\newcommand{\eps}{\epsilon}
\newcommand{\upconv}{\nearrow}
\newcommand{\conv}{\rightarrow}
\newcommand{\Hoe}[2]{\spC^{#1,#2}}
\newcommand{\spC}{\mathcal{C}}
\newcommand{\Ck}[1]{\spC^{#1}}
\newcommand{\Cinf}{\Ck\infty}
\newcommand{\Czero}{\Ck0}
\newcommand{\Cone}{\Ck1}
\newcommand{\Ctwo}{\Ck2}
\newcommand{\DomR}{\Dom_R}
\newcommand{\cDomR}{\closure{\DomR}}
\newcommand{\cDomRi}{\cDomR\union\set{\infty}}
\newcommand{\bx}{\beta}
\newcommand{\Bv}{S}
\newcommand{\fs}{g}
\newcommand{\ffs}{\vec\fs}
\newcommand{\fz}{\fs^0}
\newcommand{\fo}{\fs^1}
\newcommand{\vsz}{\vs^0}
\newcommand{\vso}{\vs^1}
\newcommand{\QQ}{Q}
\newcommand{\TT}{\finrect_a}
\newcommand{\mirr}{M}
\newcommand{\Lama}{\Lam_a}
\newcommand{\lrva}{\lrv_a}
\newcommand{\lama}{\lam_a}
\newcommand{\Lamb}{\Lam_b}
\newcommand{\lrvb}{\lrv_b}
\newcommand{\union}{\cup}
\newcommand{\isect}{\cap}
\newcommand{\closure}[1]{\overline{#1}}
\newcommand{\bdry}{\partial}
\newcommand{\setdiff}{\backslash}
\newcommand{\norma}[2]{\norm{#1}_{#2}}
\newcommand{\norm}[1]{\left\|#1\right\|}
\newcommand{\snorm}[1]{\left|#1\right|}
\newcommand{\snorma}[2]{\snorm{#1}_{#2}}
\newcommand{\map}[3]{#1:#2\mapa#3}
\newcommand{\mapa}{\rightarrow}
\newcommand{\Det}{\operatorname{det}}
\newcommand{\sign}{\operatorname{sign}}
\newcommand{\cartp}{\times}
\newcommand{\finrect}{U}
\newcommand{\Dirt}{\Dir^\finrect}
\newcommand{\vu}{\vec u}
\newcommand{\Diri}{\Dir^\infty}
\newcommand{\carect}{R}
\newcommand{\Sigb}{\bdry\carect\isect\Slipb}%
\newcommand{\Sigi}{\bdry\carect\setdiff\Slipb}%
\newcommand{\polaeps}{\eps}
\newcommand{\gC}{C}
\newcommand{\rhsf}{F}
\newcommand{\rmin}{\underline\rad}
\newcommand{\lrvmin}{\underline\lrv}
\newcommand{\Pola}{\Theta}
\newcommand{\morex}{\delta}
\newcommand{\pLam}{\nabla_\Lam}
\newcommand{\Dir}{\mathcal{D}}
\newcommand{\quaf}{\quasic_\ff}
\newcommand{\ff}{\vec f}
\newcommand{\fg}{\vec F}
\newcommand{\isenc}{\gamma}
\newcommand{\polalo}{\pola^0}
\newcommand{\polahi}{\pola^1}
\newcommand{\polaa}{\pola^\lam}
\newcommand{\lrvlo}{\lrv_0}
\newcommand{\lrvhi}{\lrv_1}
\newcommand{\lam}{q}
\newcommand{\Lam}{\vec\lam}
\newcommand{\quasic}{K}
\newcommand{\elli}{\kappa}
\newcommand{\www}{w}
\newcommand{\wpot}{\Phi}
\newcommand{\sDom}{\Dom\union\Slipb} 
\newcommand{\Dom}{\Omega}
\newcommand{\Slipb}{B}
\newcommand{\vx}{v^x}
\newcommand{\vy}{v^y}
\newcommand{\lrv}{\ell}
\newcommand{\Lrv}{L}
\newcommand{\cz}{\overline z}
\newcommand{\pcz}{\pd\cz}
\newcommand{\pz}{\pd z}
\newcommand{\Dt}{D_t}
\newcommand{\vm}{\vec m}
\newcommand{\vmssonic}{\vms_1} 
\newcommand{\vort}{\omega}
\renewcommand{\Im}{\operatorname{Im}}
\newcommand{\dens}{\varrho}
\newcommand{\vpot}{\phi}
\newcommand{\stf}{\psi}
\newcommand{\piv}{p}
\newcommand{\pif}{\hat\piv}
\newcommand{\ipif}{\pif^{-1}}
\newcommand{\pp}{P}
\newcommand{\ppf}{\hat\pp}
\newcommand{\sobexf}{\sigma}
\newcommand{\rad}{r}
\newcommand{\vn}{\vec n}
\newcommand{\vs}{\vec s}
\newcommand{\xx}{{\vec x}}
\newcommand{\pola}{\theta}
\newcommand{\ssnd}{c}
\newcommand{\Mach}{M}
\newcommand{\ps}{\pd s}
\newcommand{\pl}{\pd\lrv}
\newcommand{\pr}{\pd\rad}
\newcommand{\px}{\pd x}
\newcommand{\py}{\pd y}
\newcommand{\vvi}{\vv(\infty)}
\newcommand{\gdiv}{\vec a}
\newcommand{\gmod}{\vec{\tilde a}}
\newcommand{\hdiv}{\hat\tau}
\newcommand{\hmod}{\tilde\tau}
\newcommand{\dhmod}{\tilde\tau'}
\newcommand{\dhdiv}{\hat\tau'}
\newcommand{\vms}{\mu}
\newcommand{\vmsmax}{\overline\vms}
\newcommand{\hexp}{\zeta}
\renewcommand{\vec}[1]{\mathbf{#1}}
\begin{document}

\newcommand{\mytitle}{Nonexistence of compressible irrotational inviscid flows along infinite protruding corners}
\title{\mytitle}%
\author{Volker Elling}%
\date{}
\maketitle%

\begin{abstract}%
  We consider inviscid flow with isentropic coefficient greater than one.
  For flow along smooth infinite protruding corners we attempt to impose a nonzero limit 
  for velocity at infinity at the upstream wall. 
  We prove that the problem does not have any irrotational uniformly subsonic solutions,
  whereas rotational flows do exist. 
  This can be considered a case of a slip-condition solid ``generating'' vorticity in inviscid flow. 
\end{abstract}

\section{Introduction and related work}

\begin{figure}[h]
  \centerline{\input{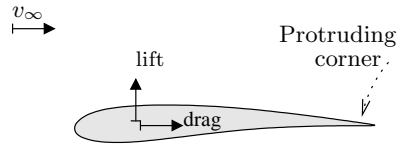}}
  \caption{Flow around a body that is smooth except for one protruding corner}
  \label{fig:onecorner}
\end{figure}

We consider steady planar flow around obstacles, one of the fundamental problems of fluid mechanics. 
Among its most important classical applications is the calculation of \defm{lift} (see fig.\ \ref{fig:onecorner}).
For flow around a bounded connected body with boundary smooth except for one \defm{protruding}\footnote{fluid-side corner angle greater than $\pi$, 
  as opposed to \defm{receding} corners which have angles less than $\pi$}
corner, somewhat accurate\footnote{see \cite[fig.\ 6.7.10]{batchelor} and the accompanying discussion}
and widely used formulas for lift can already be obtained at the level of incompressible irrotational inviscid flow: 
if we impose the \defm{Kutta-Joukowsky condition}, namely that $\vv$ is bounded, in particular at the corner, 
then there is a unique solution. 
In addition, formulas for the velocity field and hence the lift can be obtained for many shapes by basic complex analysis. 
The classical work of Frankl and Keldysh \cite{frankl-keldysh-1934}, Shiffman \cite{shiffman-exi-potf}, Bers \cite{bers-exi-uq-potf}, 
Finn-Gilbarg \cite{finn-gilbarg-uniqueness} etc.\ proves 
existence and uniqueness results and generalizes the Kutta-Joukowsky lift calculations to the compressible subsonic case.

\begin{figure}[h]
  \hfil\parbox{2.6in}{\input{smoothangle.pstex_t}}\hfill\parbox{2in}{\input{smoothanglesheet.pstex_t}}\hfil
  \caption{Left: $\Dom$ covers an angle $\Pola$ at infinity, with variable but smooth boundaries. 
    This domain, for $\pp(\dens)=\dens^\isenc$ with $\isenc>1$,
    does not allow compressible uniformly subsonic flows whose velocity is bounded but nonvanishing at infinity. 
    Right: if vorticity is allowed, then there are trivial solutions with $\vv=\vvi=\const\neq 0$ above the vortex sheet, in particular
    near infinity at the upstream wall, whereas $\vv=0$ below the sheet.
  }
  \label{fig:infangle}
\end{figure}
Part of the well-known d'Alembert paradox (\cite{dalembert-paradox}, \cite[p.\ 54]{chorin-marsden}) 
is that irrotational flow --- with or without Kutta-Joukowsky condition --- predicts zero drag. 
Although this is a fair approximation for some shapes that do have very low drag, 
there are other shapes with significant drag even at small viscosity and small Mach number. 
To model nonzero drag it is necessary to allow solutions with \defm{vorticity},  
i.e.\ to go beyond purely irrotational flow. 
Such solutions are well-known \cite{helmholtz-1868,kirchhoff-wake-1869,rayleigh-1876,levi-civita-1907}, 
but it is necessary to explain how inviscid smooth flow can generate vorticity (or the closely related \defm{circulation}, which is needed for nonzero lift).

Prandtl's theory of boundary layers \cite{prandtl-1904} explains that thin boundary layers at low viscosity can be unstable, 
triggering flow separation and injecting vorticity into the interior of the fluid. 
While this is the observed physical mechanism \cite[fig.\ 34]{van-dyke}, it is worth pointing out that vorticity can be generated even if viscosity is exactly zero, 
in the sense that particular inviscid flow problems do not have \emph{any} irrotational solutions, but \emph{some} rotational ones. 

To this end, consider an infinite protruding smoothened or sharp corner (see fig.\ \ref{fig:infangle} left).
It is natural to look for solutions whose velocity is bounded and converges near the upstream wall ($\graph\polahi$) 
to a prescribed nonzero constant at spatial infinity. 
Indeed for \emph{super}sonic velocity many shapes have well-known solutions based on simple waves \cite[section 111]{courant-friedrichs}. 
However, we prove for polytropic pressure law with isentropic coefficient above $1$ 
that there do not exist any uniformly subsonic irrotational flows of this type. 
If we do permit rotation, then some of the same shapes do allow easily constructed solutions with straight vortex sheet separating from the wall 
(fig.\ \ref{fig:infangle} right). 

We make a point of proving nonexistence in the function class of (arguably) minimal regularity: 
essentially bounded velocities, without assumptions of continuity. 
For the wall we assume $\Ctwo$ unit tangents converging to constants
at a rate $o(|x|^{-\eps})$ as $|x|\conv\infty$, for arbitrarily small $\eps>0$ (see \eqref{eq:Domdef}).
We also prove an analogous result for incompressible flow (Theorem \ref{th:infangle-incomp}), 
assuming additionally that the velocity at infinity is bounded.
In the compressible case unbounded velocity is intrinsically impossible, since
the Bernoulli relation determines density as a function of velocity, and density reaches zero at a certain finite \defm{limit speed} (Section \ref{section:limitspeed})
and does not have any sensible definition at higher speeds. 

  \cite{yang-hui-huicheng-yin} show, under stronger regularity assumptions, 
  that the only subsonic flow in a smooth \emph{receding} corner is $\vv=0$. 
  Such results are sometimes called ``Liouville-type'' theorems. 
  This result
  has an alternative and very short proof by comparison principles \cite[Theorem 4.1]{elling-sepsheet},
  which are intrinsically insufficient to prove the protruding case. 

  However, our theorem is of a different nature: 
  example \ref{example:vnonzero} provides an incompressible flow around a \emph{smooth} protruding corner that has \emph{nonzero} velocity
  (vanishing only in the limit at infinity). 
  Protruding corners are fundamentally different from receding ones, as can be observed in Kutta-Joukowsky theory and many other aspects. 

  Protruding corners have also been studied extensively for minimal and capillary surfaces 
  (see \cite{danzhu-shi-pacj} and references therein). 
  The 2d minimal surface equation can be regarded as a special case\footnote{isentropic coefficient $-1$, sometimes called \defm{Chaplygin gas}}
  of plane compressible irrotational inviscid flow,
  but it is fundamentally different because the equation remains well-defined and even elliptic (albeit non-uniformly)
  as the ``velocity'' approaches infinity. 
  This is one example of several phenomena that are 
  considered unusual in actual fluids 
  and do not occur for  isentropic coefficients above $1$ (see Section \ref{section:limitspeed}).
  
  In section \ref{section:equations} we recall the necessary partial differential equations and fluid variables notation; 
  in section \ref{section:theorem} we state the precise theorem. 
  Section \ref{section:intslipregu} recalls basic regularity results for quasilinear elliptic PDE. 
  Section \ref{section:morrey} discusses in detail how to prove ``regularity at infinity'' which immediately yields the main result.

\section{Equations and main result}
\label{section:equations}

\subsection{Domain definition}

\begin{definition}
  \label{def:infangledom}%
    Consider polar coordinates $(\rad,\pola)$ and a set (see fig.\ \ref{fig:infangle} left)
    \begin{alignat}{5} \Dom = \selset{(\rad,\pola)}{\rad>\rmin,\ \polalo(\rad)<\pola<\polahi(\rad)} \label{eq:Domdef}\end{alignat} 
    where $0<\rmin<\infty$ and where $\polalo,\polahi$ are $\Ck3$ functions of $\rad\in\boi{\rmin}{\infty}$ satisfying 
    \begin{alignat}{5} |\pr^k\polaa| \leq C \rad^{-k-\polaeps} \qquad(\lam=0,1,~k=1,2,3) \label{eq:prpola}\end{alignat} 
    for some constants $C<\infty$ and $\polaeps>0$. This implies (by integrating the $k=1$ case over $\rad$) that
    \begin{alignat*}{5} \polaa(\rad) = \polaa(\infty) + O(\rad^{-\polaeps}) \quad\text{as $\rad\conv\infty$} \end{alignat*}
    The \defm{solid boundary} is 
    \begin{alignat*}{5} \Slipb = \selset{(\rad,\pola)}{\rad\in\boi{\rmin}{\infty},\ \pola\in\set{\polalo(\rad),\polahi(\rad)}} \end{alignat*} 
    its two components enclose an angle
    \begin{alignat*}{5} \Pola = \polahi(\infty)-\polalo(\infty) \end{alignat*} 
    which is assumed to be in $\boi0\pi$ or $\boi\pi{2\pi}$. 
\end{definition}

\subsection{Isentropic Euler}

The \defm{isentropic Euler} equations are
\begin{alignat*}{5} 
0 &= \pt\dens + \ndiv(\dens\vv) \\
0 &= \pt(\dens\vv) + \ndiv(\dens\vv\tensor\vv) + \nabla\pp 
\end{alignat*} 
where $\vv$ is velocity; pressure $\pp=\ppf(\dens)$ will be a strictly increasing $\Cinf$ function of density $\dens$. 

Assuming sufficient regularity the equations can be expanded into
\begin{alignat}{5} 
0 &= \Dt\dens + \dens\ndiv\vv \label{eq:masst}\\
0 &= \Dt\vv + \dens^{-1}\ppf_\dens(\dens)\nabla\dens \csep \Dt = \pt + \vv\dotp\nabla \label{eq:vpre}\end{alignat} 
Linearizing the equations around the constant solution 
\begin{alignat*}{5} \dens=\overline\dens=\const>0 \csep \vv=\overline\vv=\const=0 \end{alignat*}
yields 
\begin{alignat*}{5} 0 &= \pt\dens + \overline\dens \ndiv\vv \\
0 &= 
\pt\vv + \overline\dens^{~-1} 
\ppf_\dens(\overline\dens) \nabla\dens 
\end{alignat*} 
and subtracting $\ndiv$ of the lower equation from $\pt$ of the upper one yields
\begin{alignat*}{5} 
0 &= (\pt^2-\overline\ssnd^2\Lap)\dens 
\end{alignat*}
which is the linear wave equation with \defm{sound speed} 
\begin{alignat*}{5} \overline\ssnd \defeq \sqrt{\ppf_\dens(\overline\dens)} \end{alignat*} 

\eqref{eq:vpre} can be rewritten
\begin{alignat}{5} 0 = \Dt\vv + \nabla\piv  \label{eq:v}\end{alignat} 
where the \defm{enthalpy per mass}\footnote{Symbols like $h$ or $\pi$ are also commonly used; for consistency with the incompressible case we write $\piv$,
which should not be confused with pressure $\pp$.} $\piv=\pif(\dens)$ is defined (up to an additive constant) by
\begin{alignat}{5} \pif_\dens(\dens) = \dens^{-1}\ppf_\dens(\dens) \quad. \label{eq:pp-piv}\end{alignat}

\subsection{Potential flow}

Taking the curl of \eqref{eq:v} eliminates $\nabla\piv$, producing an equation for \defm{vorticity} $\vort=\ncurl\vv=v^y_x-v^x_y$:
\begin{alignat*}{5} 0 = \ncurl\pt\vv + \ncurl(\vv\dotp\nabla\vv) = ... = \Dt\vort + \vort\ndiv\vv \quad. \end{alignat*} 
Combined with \eqref{eq:masst} we obtain the transport equation
\begin{alignat*}{5} 0 = \Dt\frac\vort\dens\quad. \end{alignat*} 
If $\vort=0$ at $t=0$ (and $\dens>0$ throughout), then $\vort=0$ for all time.
There are important reasons to consider nonzero vorticity, 
as we pointed out in the introduction;
to demonstrate this, we explore consequences of assuming it is zero. 

$\ncurl\vv=0$ implies
\begin{alignat}{5} \vv = \nabla\vpot \label{eq:vpot}\end{alignat} 
for a scalar \defm{velocity potential} $\vpot$
(which is locally defined and may be multivalued when extended to non-simply connected domains). 

Henceforth we focus on stationary flow:
\begin{alignat}{5} 0 &= \ndiv(\dens\vv) \label{eq:mass} \quad, \\
0 &= \ndiv(\dens\vv\tensor\vv+\pp) \label{eq:mom}
\end{alignat} 
which yield 
\begin{alignat*}{5} 0 &= \vv\dotp\nabla\vv+\nabla\piv \quad. \end{alignat*} 
Into the latter substitute \eqref{eq:vpot} to obtain\footnote{with $\vv^2=\vv\vv^T$ and $\hess$ the Hessian operator}
\begin{alignat*}{5} 0 = \nabla\vpot^T\hess\vpot + \nabla(\pif(\dens)) = \nabla\big( \half|\nabla\vpot|^2 + \pif(\dens) \big) \quad. \end{alignat*} 
This implies the \defm{Bernoulli relation}\footnote{We will need only this special case.}
\begin{alignat}{5} \half|\vv|^2+\pif(\dens) = \text{Bernoulli constant}.  \label{eq:ber}\end{alignat} 
$\pif_\dens(\dens)=\dens^{-1}\ppf_\dens(\dens)=\dens^{-1}\ssnd^2>0$, so $\pif$ is strictly increasing. 
Hence we can solve for
\begin{alignat}{5} \dens = \ipif \big( \text{Bernoulli constant} - \half|\vv|^2 \big) \label{eq:pividens}\end{alignat} 
for some maximal interval of $|\vv|$ closed at its left endpoint $0$. 

Substituting \eqref{eq:pividens} into \eqref{eq:mass} yields a second-order scalar differential equation for $\vpot$ called \defm{compressible potential flow}. 
After differentiation it is equivalent to\footnote{with Frobenius product $A:B=\trace(A^TB)$; note $A:\vec w^2=\vec w^TA\vec w$}
\begin{alignat}{5} 0 = \big(I-(\frac{\vv}{\ssnd})^2\big):\hess\vpot = \big(1-(\frac{v^x}{\ssnd})^2\big)\vpot_{xx} - 2\frac{v^x}{\ssnd}\frac{v^y}{\ssnd}\vpot_{xy} 
+ \big(1-(\frac{v^y}{\ssnd})^2\big)\vpot_{yy} \label{eq:comp-potf}\end{alignat} 
where $\ssnd$ is a function of $\dens$, hence of $\vv=\nabla\vpot$. 
The eigenvectors of the coefficient matrix $I-(\vv/\ssnd)^2$ are $\vv$ and\footnote{$\perp$ counterclockwise rotation by $\pi/2$} $\vv^\perp$, 
with eigenvalues $1-\Mach^2$ and $1$ where 
\begin{alignat*}{5} \Mach \defeq \vlen\vv/\ssnd \end{alignat*} 
is the \defm{Mach number}. 
Hence \eqref{eq:comp-potf} is elliptic in a given point if and only if 
\begin{alignat*}{5} \Mach < 1 \quad, \end{alignat*} 
i.e.\ if and only if velocity $|\vv|$ is below the speed of sound $\ssnd$; such flows are called \defm{subsonic}. 

A \defm{uniformly subsonic} flow has $\Mach\leq 1-\delta$ for some constant $\delta>0$ independent of $\xx$. 
Many classical results have been extended to the non-uniformly subsonic case, 
but in this article we prefer brevity over a slight improvement in generality.

\subsection{Polytropic pressure law}
\label{section:limitspeed}%

For the remainder of the paper we avoid complications by focusing on polytropic pressure laws\footnote{for simplicity we suppress additional constant factors in $\ssnd,\dens,\piv$; they can always be made $1$ by changing physical units}:
\begin{alignat*}{5} \ppf(\dens) = \frac{\dens^\isenc}{\isenc} \end{alignat*} 
We only consider \defm{isentropic coefficients} $\isenc$ greater than $1$;
particularly important choices are $5/3$ (e.g.\ helium) and $7/5$ (e.g.\ air). 
Since the boundedness of velocity is a key point, 
we discuss the effects of increasing velocity in detail. 

The speed of sound is obtained via $\ssnd^2=\ppf_\dens(\dens)$ as 
\begin{alignat*}{5} \ssnd = \dens^{\frac{\isenc-1}2} \end{alignat*} 
which is positive for $\dens>0$. $\pif_\dens(\dens)=\dens^{-1}\ppf_\dens(\dens)$ yields, up to an additive constant, 
\begin{alignat}{5} \pif(\dens) = \frac{\dens^{\isenc-1}}{\isenc-1} \label{eq:pif-polytropic}\end{alignat} 
Using \eqref{eq:pif-polytropic}, \eqref{eq:pividens} becomes (by normalizing $\dens=1$ at $\vv=0$)
\begin{alignat*}{5} \dens =   \Big( 1 - \frac{\isenc-1}{2}|\vv|^2 \Big)^{\frac1{\isenc-1}} \quad. \end{alignat*} 
Since $\isenc>1$, $\dens$ decreases to \defm{vacuum} $0$ as $|\vv|$ increases to the finite \defm{limit speed} $\sqrt{2/(\isenc-1)}$. 
$\dens$ is \emph{undefined} above the limit speed, and there is no sensible way to modify the definition; 
informally speaking, fluid cannot\footnote{This is not true for arbitrary pressure laws, although there are some natural equivalent conditions.} accelerate to arbitrarily high speed by moving to near-vacuum regions
because the pressure and its gradient decay too rapidly to impart unbounded kinetic energy.

Finally
\begin{alignat*}{5} \ssnd^2 = \dens^{\isenc-1} = 1 - \frac{\isenc-1}{2}|\vv|^2 \end{alignat*} 
so 
\begin{alignat*}{5} \ssnd^2 - |\vv|^2 = 1 - \frac{\isenc+1}{2} |\vv|^2 \end{alignat*} 
The left-hand side is $>0$ if and only if the flow is subsonic ($|\vv|<\ssnd$), i.e.\ iff $|\vv|$ is below the \defm{critical speed} $\sqrt{2/(\isenc+1)}$.

\subsection{Streamfunction formulation}
\label{section:streamfunction}%

We will need an alternative formulation of irrotational flow, which is obtained as follows: $\ndiv(\dens\vv)=0$
implies\footnote{with $\nabla^\perp=(-\partial_y,\partial_x)$}
\begin{alignat*}{5} \dens\vv = -\nperp\stf \end{alignat*} 
for a scalar \defm{stream function} $\stf$. With $\vm=\rho\vv$,
consider the Bernoulli relation \eqref{eq:ber} in the form
\begin{alignat}{5} \text{Bernoulli constant} = \subeq{\supeq{\half|\vm|^2}{\vms}~\dens^{-2}+\pif(\dens) }{\eqdef \rhsf(\dens,\vms)} \quad \label{eq:berstf} \end{alignat} 
and apply the implicit function theorem, noting the trivial solution 
\begin{alignat*}{5} (\dens,\vms)=(1,0). \end{alignat*}
At solutions $(\dens,\vms)$ of \eqref{eq:berstf} 
that are vacuum-free and subsonic,
\begin{alignat*}{5} 
\frac{\partial\rhsf}{\partial\vms}
&=
\dens^{-2} > 0 \quad\text{and}
\\
\frac{\partial\rhsf}{\partial\dens}
&=
-\dens^{-3}|\vm|^2+\pif_\dens(\dens)
=
\dens^{-1}(c^2-|\vv|^2) > 0 \quad,
\end{alignat*} 
so we obtain a solution
\begin{alignat}{5} \frac1\dens = \hdiv(\vms) \label{eq:dens-hdiv}\end{alignat} 
for a strictly increasing function $\hdiv$ defined for $\vms$ in some maximal\footnote{This step works only for \emph{subsonic} velocities, so that the stream function formulation is awkward unless the flow is purely subsonic (or purely supersonic),
which is the only case we need.} interval $\cli{0}{\vmssonic}$ for some constant $\vmssonic\in\boi0\infty$; for $\vms=\vmssonic$ the velocity is exactly sonic.

Having solved the mass and Bernoulli equations it remains to ensure irrotationality\footnote{which is needed to recover the original velocity equation from the Bernoulli relation}:
\begin{alignat}{5} 0 = \ncurl \vv = \ncurl \frac{-\nperp\stf}{\dens} = -\ndiv\Big( \subeq{ \hdiv(\frac{|\nabla\stf|^2}{2}) \nabla\stf }{\eqdef g(\nabla\stf)} \Big) \label{eq:stf-divform} \end{alignat} 
\begin{definition}
  A \defm{uniformly subsonic compressible flow} in $\Dom$ is a \defm{stream function} $\stf\in\woba1\infty(\Dom)$
  (the space of functions with distributional derivatives that are essentially bounded functions), 
  satisfying
  \begin{alignat*}{5} \esssup_{\Dom} \half|\vm|^2 < \vmssonic \end{alignat*} 
  (which is equivalent to $\esssup_{\Dom}\Mach < 1$),
  and satisfying \eqref{eq:stf-divform} in the distributional sense, i.e.
  \begin{alignat*}{5} 0 = \int_{\Dom} \hdiv(\half|\nabla\stf|^2) \nabla\stf\dotp\nabla\vartheta~d\xx \end{alignat*} 
  for every smooth function $\vartheta$ with compact support in $\Dom$,
  as well as the slip condition\footnote{for our domains $\woba1\infty$ has well-defined trace on the boundary} $\stf=0$ on $\Slipb$.
\end{definition}

Assuming sufficient additional regularity, differentiation yields after some calculation that
\begin{alignat}{5} 
0
&=
\big(1-(\frac{\vv}{\ssnd})^2\big):\hess\stf = \big(1-(\frac{v^x}{\ssnd})^2\big)\stf_{xx} - 2\frac{v^xv^y}{\ssnd^2}\stf_{xy} + \big(1-(\frac{v^y}{\ssnd})^2\big)\stf_{yy} 
\label{eq:stf-2d}
\end{alignat} 
which has the same coefficient matrix as \eqref{eq:comp-potf}; again it is elliptic if and only if the flow is subsonic.

\subsection{Incompressible flow}

The incompressible limit of \eqref{eq:stf-2d} is obtained by (for example) considering sequences of solutions with velocities approaching $0$
(and hence sound speed converging to a positive constant), 
This yields the limit
\begin{alignat*}{5} 0 &= -\Lap\stf \end{alignat*}
which is \eqref{eq:stf-divform} with $\hdiv=1$. 
This can also be obtained (see e.g.\ \cite{klainerman-majda-singular}) along similar lines as for compressible flow from the unsteady incompressible Euler equations 
\begin{alignat*}{5} 0 &= \ndiv\vv, \\
0 &= \Dt\vv + \nabla\piv;
\end{alignat*} 
here $\dens=\const$, and $\piv$ is not a function of $\dens$ but rather a separate unknown making the second equation divergence-free. 
We can require $\stf\in\woba1\infty(\Dom)$ for consistency, 
but standard regularity results for harmonic functions will yield analyticity in the interior $\Dom$ anyway. 
The Mach number of incompressible solutions is generally ``defined'' as $0$, which is also the actual limit in various definitions of ``incompressible limit''.

Incompressible potential flows correspond to harmonic functions; 2d harmonic functions are conveniently represented by holomorphic functions of a single variable. 
To this end it is customary to consider the \defm{complex velocity}
\begin{alignat*}{5}
  \www &\defeq \vx-i\vy 
\end{alignat*}
as a function of
\begin{alignat*}{5}  
z &\defeq x+iy. 
\end{alignat*} 
Then 
\begin{alignat*}{5}
  \pcz \www 
  &= \half(\px+i\py)(\vx-i\vy) 
  = \half\big( \px\vx+\py\vy + i(\py\vx-\px\vy) \big) 
  = \half( \ndiv\vv - i\ncurl\vv ) .
\end{alignat*}
Hence $\www$ represents an \emph{incompressible} and \emph{irrrotational} flow if and only if $\www$ is holomorphic. 

If so, it is convenient to use the \defm{complex velocity potential} $\wpot=\int^z\www~dz$; the lower endpoint of the integral is fixed (changing it only adds a constant);
the path does not matter locally since $\www$ is holomorphic, but for non-simply connected domains $\wpot$ may be multivalued
(the simplest example being the \defm{point vortex} $\wpot=\frac1{2\pi i}\log z$ which has multi-valued $\vpot$,
but corresponds to the single-valued $\vv=(2\pi)^{-1}|\xx|^{-2}(-y,x)$).
\begin{alignat*}{5}
  \wpot = \vpot + i\stf 
\end{alignat*}
is also holomorphic, satisfying Cauchy-Riemann equations
\begin{alignat*}{5}
  \vpot_x = \stf_y \csep \vpot_y = -\stf_x ,
\end{alignat*}
and hence 
\begin{alignat*}{5}
  \www 
  &= \vx&&-i\vy 
  &&= \pz\wpot 
  = \half(\px-i\py)(\vpot+i\stf) 
  = \half[(\vpot_x+\stf_y) + i(\stf_x-\vpot_y)]
  \\&= \vpot_x &&- i\vpot_y 
  \\&= \stf_y &&- i(-\stf_x)
  \\\eqv\quad \vv &= \begin{bmatrix}
    \vx  \\ \vy
  \end{bmatrix} &&= \nabla\vpot = -\nperp\stf
\end{alignat*}

\subsection{Slip condition}

At solid boundaries we use the standard \defm{slip condition}
\begin{alignat}{5} 0 = \vn \dotp \vv\quad, \label{eq:slip}\end{alignat} 
where $\vn$ is a normal to the solid. For potential flow:
\begin{alignat*}{5} 0 = \vn \dotp \nabla\vpot \quad. \end{alignat*} 
In the stream function formulation:
\begin{alignat*}{5} 0 = \vs \dotp \nabla\stf \quad, \end{alignat*} 
where $\vs$ is a tangent to the solid. In the latter case integration along connected components of (say) a piecewise $\Cone$ boundary yields 
\begin{alignat}{5} \stf = \const \label{eq:stf-const} \quad. \end{alignat} 
For incompressible flow we may also use the complex velocity potential formulation:
\begin{alignat*}{5}
  \Im\wpot = \const ,
\end{alignat*}
i.e.\ given $\wpot$ the level sets of $\Im\wpot$ may serve as solid boundaries.

If the solid boundary has a single connected component, 
we may add an arbitrary constant to $\stf$ without changing $\vv=-\nperp\stf$ to obtain the convenient zero Dirichlet condition
\begin{alignat*}{5}
  \stf = 0.
\end{alignat*}
In some important cases, such as flow through a nozzle, the solid boundary has several connected components, 
e.g.\ lower and upper nozzle boundary, and we generally need several different constants which cannot all be zero. 
Although we do have two components when considering only a domain $\Dom$ at infinity, 
our physical perspective is flow along smooth protruding corners, hence simply connected domains, with connected boundary, so we use $\stf=0$.

  \section{Theorem}
  \label{section:theorem}

  \begin{theorem}
    \label{th:infangle}%
    If $\Pola\notin\set{0,\pi,2\pi}$, 
    and for polytropic pressure law with isentropic coefficient above $1$, 
    there do not exist any uniformly subsonic flows in $\Dom$ whose velocity does not vanish at infinity. 
  \end{theorem}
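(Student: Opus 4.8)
The plan is to phrase everything through the stream function \eqref{eq:stf-divform}: a uniformly subsonic flow is a Lipschitz $\stf\in\woba1\infty(\Dom)$ solving $\ndiv\big(\hdiv(\tfrac12|\nabla\stf|^2)\nabla\stf\big)=0$ weakly, with $\stf=0$ on $\Slipb$. Uniform subsonicity makes this divergence-form equation uniformly elliptic, and because $\isenc>1$ the density stays between two positive constants (bounded away from both vacuum and the sonic density), so $|\nabla\stf|=\dens|\vv|$ is comparable to $|\vv|$; in particular $\vv\conv0$ at infinity if and only if $\nabla\stf\conv0$ there. I would therefore reduce the theorem to the single claim that every uniformly subsonic flow satisfies $\vv\conv0$ as $|\xx|\conv\infty$: granting such a flow, establishing this limit contradicts the hypothesis that velocity does not vanish at infinity, and the theorem follows.

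The analytic heart is a regularity-at-infinity statement obtained by blow-down. First I would record uniform interior and up-to-the-boundary $\Ck{1,\alpha}$ estimates for the equation (Section \ref{section:intslipregu}), with constants depending only on the ellipticity gap and the Lipschitz norm; these are uniform in $|\xx|$ because \eqref{eq:prpola} forces the rescaled walls to converge in $\Ck2$ to the two straight rays bounding the wedge $W$ of opening $\Pola$. For $R\conv\infty$ set $\stf_R(\xx)\defeq R^{-1}\stf(R\xx)$ on $R^{-1}\Dom$; a direct computation shows $\stf_R$ solves the same equation with the same ellipticity and Lipschitz bounds, the scaling being chosen precisely so that $\nabla\stf_R(\xx)=\nabla\stf(R\xx)$ stays in the same subsonic range. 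By the uniform estimates and compactness, a subsequence converges in $\Ck1$ on compact subsets of $\overline W\setminus\{0\}$ to a globally Lipschitz, uniformly subsonic solution $\stf_\infty$ on $W$ with $\stf_\infty=0$ on both straight walls. Since $\nabla\stf_R(\xx)=\nabla\stf(R\xx)$, the target $\vv\conv0$ is exactly the assertion $\nabla\stf_\infty\equiv0$.

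It remains to classify $\stf_\infty$. The decisive input from Section \ref{section:morrey} is that the Morrey/Campanato decay of the gradient makes the blow-down limit independent of the chosen scale, hence homogeneous of degree one: $\stf_\infty=r\,f(\pola)$ with $|\nabla\stf_\infty|^2=f^2+(f')^2\eqdef s$ a function of $\pola$ alone. Substituting into the divergence-form equation and simplifying, the nonlinearity factors out:
\[ 0=(f''+f)\big(\hdiv'(\tfrac12 s)\,(f')^2+\hdiv(\tfrac12 s)\big). \]
Because $\hdiv$ is strictly increasing, the second factor is strictly positive, so necessarily $f''+f=0$; thus $f=A\cos\pola+B\sin\pola$ and $\stf_\infty=\vec a\dotp\xx$ is \emph{linear}, i.e.\ the limiting velocity is a constant vector. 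The boundary condition forces this linear function to vanish on both wall rays; since $\Pola\notin\set{0,\pi,2\pi}$ the two ray directions are not collinear, so $\vec a=0$ and $\stf_\infty\equiv0$. Hence $\vv\conv0$ at infinity, the desired contradiction. (Equivalently: a constant velocity tangent to two non-parallel walls must vanish.)

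I expect the main obstacle to be the homogeneity of the blow-down limit, i.e.\ the Morrey energy-decay estimate underlying regularity at infinity. Boundedness of $\vv$ alone does not force convergence at a reflex corner $\Pola>\pi$, where elliptic solutions generically admit non-decaying and even singular gradients and where comparison principles are unavailable; the work is to rule out oscillatory, non-convergent gradients for the genuinely nonlinear equation and to show that the rescalings stabilize to a single homogeneous profile. This is exactly where uniform subsonicity with $\isenc>1$ (a positive lower density bound and a fixed ellipticity gap) and the boundary decay \eqref{eq:prpola} enter. By contrast the final classification is robust: the factorization above shows the compressible critical opening coincides with the incompressible one, namely $\Pola\in\set{\pi,2\pi}$, which the hypothesis excludes.
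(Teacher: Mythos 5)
Your endgame is sound and coincides with the paper's: once one knows that $\nabla\stf$ has a limit at infinity (equivalently, that the blow-down profile has constant gradient), the slip conditions pass to the limit, the two limiting wall tangents are linearly independent precisely because $\Pola\notin\set{0,\pi,2\pi}$, and the limiting gradient must vanish. Your ODE factorization for homogeneous degree-one profiles is correct (and the factor $\hdiv+\hdiv'(f')^2$ is indeed positive since $\hdiv$ is positive and increasing), but it is more machinery than needed: the paper's Theorem \ref{th:inflocalmorrey} gives H\"older continuity of $\nabla\stf$ up to and including infinity, so the limit is already a single constant vector and the classification reduces to ``a constant vector annihilated by two non-parallel tangents is zero''.

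The genuine gap is the step you yourself flag as the main obstacle: the homogeneity --- indeed even the subsequence-independence --- of the blow-down limit. You justify it by appealing to ``the Morrey/Campanato decay of the gradient'' from Section \ref{section:morrey}, but that decay estimate \emph{is} the theorem proved there; it is the entire analytic content of the paper, established via quasiconformality of $\mirr\nabla\stf$, the boundary-adapted map $\ffs=\Bv\ff$ that converts the slip conditions into $\fz=0$ and $\fo=0$ on the two walls, a hole-filling Dirichlet-integral estimate on expanding rectangles in the log-polar strip, and a Campanato-type iteration, with the wall decay \eqref{eq:prpola} controlling the error terms. Without such an estimate, uniform interior and boundary $\Ck{1,\sobexf}$ bounds plus compactness yield only subsequential limits; there is no monotonicity formula available here forcing different scales $R_j\conv\infty$ to produce the same, let alone a homogeneous, profile, and a bounded uniformly subsonic gradient at a protruding corner could a priori oscillate between directions without converging --- ruling this out is exactly the point, since comparison principles fail for $\Pola>\pi$. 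As written, the proposal therefore assumes the paper's key estimate in order to prove a statement that, once that estimate is granted, follows in three lines with no blow-down at all. To close the gap you must actually prove a quantitative oscillation or energy-decay bound for $\nabla\stf$ on neighbourhoods of infinity (the analogue of \eqref{eq:Diri-est}), which is where the quasiconformal and Dirichlet-integral arguments of Section \ref{section:morrey} do the real work.
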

  The proof is completed at the end of section \ref{section:morrey}. 
  For incompressible flow there is an analogous theorem (which is at least partly folklore):
  \begin{theorem}
    \label{th:infangle-incomp}%
    If $\Pola\notin\set{0,\pi,2\pi}$, 
    there do not exist incompressible flows in $\Dom$ whose velocity \emph{is bounded} but does not vanish at infinity. 
  \end{theorem}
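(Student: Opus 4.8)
\emph{Plan and reduction.} Since incompressible irrotational flow has harmonic stream function, $\stf$ satisfies $\Lap\stf=0$ in $\Dom$, $\stf=0$ on $\Slipb$, and $|\vv|=|\nabla\stf|$ is essentially bounded, say $|\nabla\stf|\le V$. Because every point $(\rad,\pola)$ is joined to a wall by a circular arc of length at most $\Pola\rad\le2\pi\rad$ lying in $\Dom$ for large $\rad$, integrating $|\nabla\stf|$ along that arc gives the a priori growth bound $|\stf(\rad,\pola)|\le C\rad$. The whole statement concerns $\rad\conv\infty$, so I would restrict to $\Dom\isect\set{\rad>\rad_1}$ with $\rad_1$ large, where by \eqref{eq:prpola} the walls $\graph\polalo,\graph\polahi$ and their tangents are uniformly $O(\rad^{-\polaeps})$-close to the rays $\pola=\polalo(\infty)$, $\pola=\polahi(\infty)$ bounding an exact sector of opening $\Pola$.

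\emph{Model sector.} After rotating so that $\polalo(\infty)=0$, $\polahi(\infty)=\Pola$, the decisive computation is on the exact sector $\selset{(\rad,\pola)}{\rad>0,\ 0<\pola<\Pola}$. There the harmonic functions vanishing on both rays are spanned by the separated modes $\rad^{\pm\beta_k}\sin(\beta_k\pola)$ with $\beta_k=k\pi/\Pola$, $k=1,2,\dots$. Equivalently, the conformal power map $\zeta=z^{\pi/\Pola}$ carries the sector to the upper half plane and $\stf$ to a harmonic function vanishing on all of $\R$; Schwarz reflection extends it to a harmonic function on an exterior region $\set{|\zeta|>\const}$ that is odd across $\R$, whose Fourier--Laurent expansion contains only the modes $|\zeta|^{\pm k}\sin(k\arg\zeta)$ (oddness and vanishing on both half-axes kill the cosine, constant, $\log|\zeta|$ and $\arg\zeta$ terms). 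The velocity of $\rad^{\pm\beta_k}\sin(\beta_k\pola)$ has magnitude $\sim\rad^{\pm\beta_k-1}$, and the growth bound $|\stf|=O(\rad)$ forces the coefficient of every positive mode with $\beta_k>1$ to vanish. Every surviving positive mode then has $\beta_k<1$ (velocity $\conv0$) or $\beta_k=1$ (constant velocity), and every negative mode has velocity $\conv0$. Thus nonvanishing velocity at infinity requires $\beta_k=1$ exactly, i.e.\ $\Pola=k\pi$; since $\Pola\in\boi0\pi\union\boi\pi{2\pi}$ excludes $\set{\pi,2\pi}$, no such resonant mode exists and a bounded flow on the exact sector has $|\vv|\conv0$.

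\emph{Transferring to the perturbed domain.} The substance is to show that the exponentially (in $t=\log\rad$) small bending of the walls does not reintroduce a $\beta=1$ component. In coordinates $(t,\pola)=(\log\rad,\pola)$ the equation reads $\stf_{tt}+\stf_{\pola\pola}=0$ on a half-strip of width $\Pola+O(e^{-\polaeps t})$, with Dirichlet data on the long sides and $|\stf|=O(e^{t})$. Projecting onto the cross-sectional Dirichlet eigenfunctions turns this into a coupled system of scalar ODEs $u_k''-\beta_k^2u_k=(\text{coupling})$ whose coupling is $O(e^{-\polaeps t})$; the key structural fact is the spectral gap $\beta_k\neq1$, i.e.\ that the indicial exponent $1$ (the one giving bounded nonvanishing velocity) is not an eigenvalue. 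Standard asymptotic theory for elliptic problems on cylindrical ends (Kondrat'ev-type, or equivalently a Saint-Venant/Rellich energy estimate for $\stf$ minus its explicit $\beta_k<1$ modes) then forces the $e^{t}$-growing part to be absent and the remainder to have velocity decaying to $0$. Alternatively one straightens $\Slipb$ by a conformal map onto the exact sector which, because the deviation is $O(\rad^{-\polaeps})$, is asymptotically linear ($\Psi'\conv c\neq0$), so boundedness and vanishing of $|\vv|$ are preserved and the model conclusion applies verbatim.

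\emph{Conclusion and main obstacle.} In every case $|\vv|\conv0$ as $\rad\conv\infty$, contradicting the hypothesis that the velocity does not vanish at infinity; hence no such flow exists, the values $\Pola\in\set{0,2\pi}$ being precisely those for which $\Dom$ fails to subtend a genuine sector. The exact-sector computation is elementary; the one genuinely nontrivial step is the previous paragraph, namely verifying that the spectral gap $\Pola\neq k\pi$ survives the $O(\rad^{-\polaeps})$ bending of the boundary. That is where either the conformal-straightening estimate or the energy/asymptotic-expansion estimate must be made quantitative, and I expect it to be the main obstacle.
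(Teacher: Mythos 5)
Your route is genuinely different from the paper's. The paper never expands $\stf$ in modes: it shows (Theorem \ref{th:inflocalmorrey}) via quasiconformality of $\mirr\nabla\stf$ and a Morrey/Dirichlet-growth iteration on the straightened strip that $\nabla\stf$ is H\"older continuous up to infinity, hence has a limit $\nabla\stf(\infty)$, and then kills that limit using the two slip conditions $\vsz\dotp\nabla\stf=\vso\dotp\nabla\stf=0$, whose limiting tangents are linearly independent exactly because $\Pola\notin\set{0,\pi,2\pi}$. Your spectral gap $\beta_k=k\pi/\Pola\neq1$ is the same condition in Fourier clothing. The paper's argument has the advantage of being insensitive to the nonlinearity (it covers the compressible case verbatim); yours has the advantage that the exact-sector computation is elementary and classical, and that part of your write-up (growth bound $|\stf|=O(\rad)$, power map, odd reflection, Laurent expansion, elimination of $\beta_k>1$ by the growth bound, decay of the remaining modes) is correct.

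There is, however, a genuine gap, and you have located it yourself: for the actual domain \eqref{eq:Domdef} the entire content of the theorem is the step you call ``transferring to the perturbed domain,'' and neither of your two candidate arguments is carried out. The Schwarz reflection and Laurent expansion apply only to the exact sector; on $\DomR$ the walls are merely $O(\rad^{-\polaeps})$-close to rays, so $\stf$ itself admits no such expansion and the reflection is unavailable. The conformal-straightening route requires a Warschawski-type distortion theorem for strip domains of width $\Pola+O(e^{-\polaeps\lrv})$ --- in particular convergence of the derivative of the straightening map to a nonzero constant, in a form strong enough to transport both ``bounded'' and ``nonvanishing'' for the velocity --- and the Kondrat'ev route requires verifying that the $O(e^{-\polaeps\lrv})$ coupling is a lower-order perturbation in the relevant weighted spaces and that no new contribution at the indicial exponent $1$ is generated. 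Both are plausible and I expect either can be completed, but as written your proof stops exactly where the paper's proof begins: the Dirichlet-integral estimates of Section \ref{section:morrey} (the ``Boundedness and decay'' and ``Dirichlet integrals on squares'' steps) are precisely the paper's quantitative substitute for the step you leave as the ``main obstacle.''
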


  \begin{figure}
    \hfil\input{vnonzero.pstex}\hfil%
    \caption{Boundary ($\stf=0$, thick) and streamlines ($\stf>0$) for $\wpot=i(z^{\frac23}-1)$;
      the domain covers an angle $\Pola=\frac32\pi$ at infinity, where $|\vv|\sim\rad^{-\frac13}\conv 0$.
    }
    \label{fig:vnonzero}
  \end{figure}
  \begin{example}
    \label{example:vnonzero}%
    Even in a globally defined domain the stronger statement ``$\vv=0$ everywhere'' is generally false.
    For example consider the complex potential
    \begin{alignat*}{5} \wpot(z) = i(z^{\frac23}-1) \end{alignat*} 
    with fractional power branch cut on $\Rminus$.
    With $z=\rad e^{i\pola}$,
    $\wpot$ yields a domain
    \begin{alignat*}{5} \Dom = \set{\Im\wpot>0} 
    = \set{\rad>\cos(\frac23\pola)^{-\frac32}}. \end{alignat*} 
    $\cos(\frac23\pola)^{-\frac32}$ is smooth and positive for $\pola\in\boi{-\frac34\pi}{\frac34\pi}$;
    it converges to $+\infty$ as $\pola$ approaches the endpoints of the interval. Hence $\rad$ is lower-bounded away from $0$,
    and therefore the complex velocity $\www=\wpot'$ is bounded:
    \begin{alignat*}{5} |\wpot'(z)| = |i \frac23 z^{-\frac13}| \sim \rad^{-\frac13} . \end{alignat*} 
  \end{example}

\section{Interior and slip boundary regularity}
\label{section:intslipregu}

As discussed in section \ref{section:limitspeed} and after \eqref{eq:dens-hdiv}, 
for $\isenc>1$ uniformly subsonic flow must satisfy a gradient bound
\begin{alignat*}{5} \half|\nabla\stf|^2 \leq \vmsmax < \vmssonic < \infty \end{alignat*} 
for some constant $\vmsmax$ which depends only on $\isenc$ and $\esssup\Mach<1$.

Consider our equation in divergence form:
\begin{alignat*}{5} 0 \topref{eq:stf-divform}{=} -\ndiv \gdiv(\nabla\stf) = -\gdiv_\vm(\nabla\stf) : \nabla^2\stf . \end{alignat*} 
$\gdiv_\vm(\vm)$ is not necessarily positive definite for large $|\vm|$. 
We need to use comparison principles which require ellipticity, i.e.\ positive definite $\gdiv_\vm$.

In our case
\begin{alignat*}{5} \gdiv(\vm) = \hdiv(\supeq{\frac{|\vm|^2}{2}}{\vms})\vm, \end{alignat*} 
so it is convenient to use the following general cutoff method:
\begin{alignat*}{5} 
\gmod(\vm)
\defeq 
\hmod(\frac{|\vm|^2}{2})\vm
\end{alignat*} 
where
\begin{alignat*}{5}
\hmod(\vms) 
\defeq 
\hdiv(\vms)
\quad\text{for $\vms \leq \vmsmax $.} 
\end{alignat*}
Then
\begin{alignat*}{5} \gmod_\vm(\vm) &= \hmod(\half|\vm|^2)I + \dhmod(\half|\vm|^2)\vm^2 ; \end{alignat*} 
the eigenvectors of $\gmod_\vm$ are $\vm^\perp$ and $\vm$, with eigenvalues $\hmod(\vms)$ and $\hmod(\vms)+2\vms\dhmod(\vms)$, 
so positive definiteness is equivalent to $\hmod> 0$ and
\begin{alignat}{5} 
0 < \hmod + 2\vms \dhmod .  \label{eq:ddd}
\end{alignat} 
Since we need $\hmod$ to be $\Cone$, the straightforward choice $\hmod(\vms)=\hmod(\vmsmax)=\const>0$ for $\vms>\vmsmax$ is insufficient. 
Instead, we consider the equivalent form
\begin{alignat*}{5} 
-\half < \frac{\partial\log\hmod}{\partial\log\vms}.
\end{alignat*} 
We already have positive definiteness for $\vms\leq\vmsmax$, so 
\begin{alignat*}{5} -\half < \hexp \defeq (\frac{\partial\log\hmod}{\partial\log\vms})_{|\vms=\vmsmax}. \end{alignat*} 
It is sufficient to solve for $\vms>\vmsmax$ the ODE
\begin{alignat*}{5} \hexp = \frac{\partial\log\hmod}{\partial\log\vms} \end{alignat*} 
with initial condition $\hmod(\vmsmax)=\hdiv(\vmsmax)$ which then implies by choice of $\hexp$ that $\dhmod(\vmsmax)=\dhdiv(\vmsmax)$ as well. 
Solution: 
\begin{alignat*}{5} \hmod(\vms) = \vms^{\hexp} \vmsmax^{-\hexp}\hmod(\vmsmax) \end{alignat*}

\begin{proposition}
  \label{prop:base-regularity}%
  Compressible uniformly subsonic irrotational flows in $\Dom$ satisfy
  \begin{alignat*}{5} \stf \in \Hoe2\sobexf(\sDom) \end{alignat*} 
  for some $\sobexf\in\boi01$.
\end{proposition}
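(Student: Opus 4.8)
The plan is to read \eqref{eq:stf-divform} as a uniformly elliptic quasilinear divergence-form equation and to run the standard De Giorgi--Nash--Moser plus Schauder bootstrap, both in the interior and up to the slip boundary. The crucial preliminary observation is that, by the gradient bound $\half|\nabla\stf|^2\le\vmsmax<\vmssonic$ recalled at the start of this section, the solution never enters the region where the cutoff modification $\gmod$ differs from $\gdiv$; hence $\stf$ is also a weak solution of $0=-\ndiv\gmod(\nabla\stf)$, whose linearization $\gmod_\vm$ is uniformly positive definite with ellipticity constants depending only on $\isenc$ and $\esssup_\Dom\Mach<1$. This removes the quasilinear degeneracy near sonic speed and lets us work with a genuinely uniformly elliptic operator throughout.

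\emph{Interior regularity.} First I would apply difference quotients in the weak formulation to upgrade $\stf\in\woba1\infty(\Dom)$ to $\nabla\stf\in\soba1_{\mathrm{loc}}(\Dom)$; differentiating the equation then shows that each component $w=\pd k\stf$ is a weak solution of a linear divergence-form equation $0=\ndiv(A\nabla w)$ with $A=\gmod_\vm(\nabla\stf)$ uniformly elliptic but a priori only bounded measurable. De Giorgi--Nash--Moser then yields $w\in\Hoe0\alpha_{\mathrm{loc}}$ for some $\alpha\in\boi01$, so that $\nabla\stf$ is locally Hölder. Since $\gmod_\vm$ is a smooth function of its argument, the coefficient matrix $A$ is itself locally Hölder, and linear Schauder estimates now promote $w$ to $\Hoe1\alpha_{\mathrm{loc}}$, i.e.\ $\stf\in\Hoe2\alpha_{\mathrm{loc}}(\Dom)$.

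\emph{Boundary regularity.} At the solid wall the side condition is the linear homogeneous Dirichlet condition $\stf=0$ on $\Slipb$, and $\Slipb$ is $\Ck3$ by Definition \ref{def:infangledom}. I would straighten a boundary neighborhood by a $\Ck3$ diffeomorphism; the transformed problem is again uniformly elliptic quasilinear in divergence form with smooth structure functions and a homogeneous Dirichlet condition on a flat piece, so the identical two-step argument (boundary De Giorgi--Nash--Moser for Hölder continuity of the transformed gradient, then boundary Schauder) gives $\stf\in\Hoe2\sobexf$ up to $\Slipb$ for some $\sobexf\in\boi01$; a $\Ck{2,\alpha}$ boundary would already suffice, so the $\Ck3$ hypothesis is comfortable. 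Covering $\sDom$ by interior balls and boundary half-balls of a fixed radius, and invoking the uniform ellipticity, the uniform gradient bound, and the uniformly controlled $\Ck3$ geometry of $\Slipb$ from \eqref{eq:prpola}, patches these local estimates into membership in $\Hoe2\sobexf(\sDom)$.

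The main obstacle is the low a priori regularity: we only know that $\nabla\stf$ is bounded and measurable, so Schauder theory cannot be invoked directly. Everything hinges on first extracting genuine Hölder continuity of $\nabla\stf$ from the \emph{differentiated} equation through De Giorgi--Nash--Moser; producing that differentiated equation requires the difference-quotient step to place $\nabla\stf$ in $\soba1_{\mathrm{loc}}$ and the verification that the resulting linear equation has the correct divergence structure with uniformly elliptic measurable coefficients. Once the Hölder continuity of $\nabla\stf$ is in hand, the remaining interior bootstrap and its boundary counterpart are routine applications of the standard quasilinear and linear Schauder results recalled in this section.
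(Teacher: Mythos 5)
Your proof is correct, but it takes a genuinely different route from the paper's. You run the classical bootstrap for Lipschitz weak solutions of uniformly elliptic quasilinear divergence-form equations: difference quotients to place $\nabla\stf$ in $\soba{1}_{\mathrm{loc}}$, De Giorgi--Nash--Moser applied to the differentiated equation to get $\nabla\stf\in\Hoe0\alpha_{\mathrm{loc}}$, then linear Schauder, with the analogous flattened-boundary version at $\Slipb$. The paper instead transfers regularity by an existence-plus-uniqueness device: on a small ball $G$ it invokes \cite[Theorem 12.5]{gilbarg-trudinger} --- a strictly two-dimensional existence theorem resting on the Morrey/quasiconformal estimates that the paper develops anyway in Section \ref{section:morrey} --- to produce a solution $\tilde\stf\in\Hoe2\sobexf(G)\isect\Czero(\closure G)$ of the same Dirichlet problem with boundary data $\stf$ on $\bdry G$, and then shows $\stf=\tilde\stf$ by writing the difference of the two divergence-form equations as a linear equation with the averaged coefficient $\int_0^1\gmod_\vm\big((1-t)\nabla\tilde\stf+t\nabla\stf\big)\,dt$, which is uniformly positive definite, and testing with $\stf-\tilde\stf$; boundary regularity then follows from a single application of the linear Schauder lemma \cite[Lemma 6.18]{gilbarg-trudinger} rather than a boundary De Giorgi--Nash--Moser step. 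Your argument is dimension-independent and avoids the existence black box, at the price of carrying out the difference-quotient and De Giorgi--Nash--Moser machinery (including its boundary form for the tangential derivatives); the paper's is shorter, needing only an elementary energy identity for uniqueness, but is tied to the planar setting. Both versions correctly exploit the same key preliminary observation, namely that the uniform subsonicity bound $\half|\nabla\stf|^2\leq\vmsmax$ keeps $\nabla\stf$ in the region where $\gmod=\gdiv$ and where $\gmod_\vm$ is uniformly positive definite, and both reach the stated conclusion.
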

\begin{proof}
  \newcommand{\otherstf}{\tilde\stf}
  Using the truncated $\gmod$ version of $\gdiv$, 
  the divergence form \eqref{eq:stf-divform} of our equation yields
  \begin{alignat}{5} 0 = \ndiv(\gmod(\nabla\stf)) \label{eq:truncdiv}\end{alignat} 
  \begin{enumerate}
  \item
    First consider interior points $\xx\in\Dom$. 
    Choose a small open ball $G$ centered in $\xx$ contained in $\Dom$. 
    We prove regularity of $\stf$ by 
    showing 
    existence of a function $\otherstf\in\Hoe2\sobexf(G)\isect\Czero(\closure G)$ 
    for some $\sobexf\in\boi01$ that solves the same elliptic boundary-value problem, 
    then arguing uniqueness for that problem. 
    
    \cite[Theorem 12.5]{gilbarg-trudinger} yields existence of $\otherstf$ solving
    \begin{alignat}{5} 
    0 &= \ndiv\big(\gmod(\nabla\otherstf)\big) \quad\text{in $G$,} \label{eq:truncdivother}\\
    \stf &= \otherstf \quad\text{on $\bdry G$.}\notag
    \end{alignat} 
    To show that $\stf=\otherstf$, we use a comparison principle 
    which can be found in similar but not quite applicable forms 
    throughout the literature (see \cite[Theorem 10.7]{gilbarg-trudinger}): 
    Take the difference of the two equations and abbreviate $d=\stf-\otherstf$:
    \begin{alignat}{5} 0 &= -\ndiv\big(\gmod(\nabla\stf)-\gmod(\nabla\otherstf)\big)
      \notag\\&= -\ndiv\Big(\int_0^1 \frac{d}{dt}\big(\gmod((1-t)\nabla\otherstf+t\nabla\stf)\big)dt\Big) 
      \notag\\&= -\ndiv\Big(\subeq{\int_0^1 \gmod_\vm\big((1-t)\nabla\otherstf+t\nabla\stf)\big)dt}{\eqdef\overline A} \nabla d \Big)
      \label{eq:othercomp}
    \end{alignat} 
    Since $\nabla\otherstf$ and $\nabla\stf$ are essentially bounded, their convex combinations are (essentially) in a compact set
    on which $\gmod_\vm$ is by continuity \emph{uniformly} positive definite. Hence $\overline A$ is positive definite uniformly in $\xx$. 

    $\stf,\otherstf$ and hence $d$ are $\woba{1}{\infty}(G)$, 
    so $\overline A\nabla d\in\Linf(G)$. 
    That means $d$ may be used as test function in \eqref{eq:othercomp}: $\stf,\otherstf$ and hence $d=\stf-\otherstf$ are in $\woba1\infty(G)$,
    and $d=0$ on $\bdry G$ where $\otherstf=\stf$ by definition, 
    so we can find smooth test functions with compact support in $G$ whose gradients converge in $\Lone(G)$ to $\nabla d$. 
    Thus 
    \begin{alignat*}{5} 0 = \int_\Dom \nabla d\dotp \overline A(\xx) \nabla d~d\xx \end{alignat*} 
    which implies $\nabla d=0$, hence $d$ constant and thus $d=0$, since $d=0$ on $\bdry G$,
    so that $\stf=\otherstf$ in $\closure G$ which implies the desired regularity for $\stf$.
  \item
    To show regularity at $\Slipb$ we may use a standard boundary Schauder estimate,
    for example \cite[Lemma 6.18]{gilbarg-trudinger} using that (in particular) $\stf\in\Czero(\sDom)$
    by definition
    and $\stf\in\Ctwo(\Dom)$ by the previous step.
  \end{enumerate}
\end{proof}
\noindent This regularity result allows using the non-divergence form \eqref{eq:stf-2d} of compressible potential flow,
using point values of $\stf$, replacing $\esssup$ with $\sup$ etc, which we do henceforth without further mention.

\section{Regularity at infinity}
\label{section:morrey}

There is a large literature on showing regularity of solutions of elliptic PDE at domain corners; 
each method has some limitations. 
Classical work \cite{bers-exi-uq-potf,finn-gilbarg-uniqueness} is based on the theory of quasiconformal maps. 
The proofs implicitly use Riemann mapping theorem, pseudo-analytic functions and other tools from complex analysis,
some of which were not adopted widely or superseded by more recent\footnote{%
  The classical work predates the breakthroughs of de Giorgi and Nash.} 
developments. 
Here we give a self-contained proof which can handle somewhat more general boundary conditions as well.

For $R>\rmin$ define the following neighbourhoods of infinity:
\begin{alignat*}{5} \DomR \defeq \selset{(\rad,\pola)}{ r\in\boi{R}{\infty},~\pola\in\boi{\polalo(\rad)}{\polahi(\rad)} }, \\
\cDomR \defeq \selset{(\rad,\pola)}{ r\in\roi{R}{\infty},~\pola\in\cli{\polalo(\rad)}{\polahi(\rad)} }. \end{alignat*} 
$\Hoe0\morex(\cDomRi)$ with $\morex\in\boi01$ is the Banach space of continuous functions on $\DomR$ with well-defined limits 
in any element of $\cDomRi$ and with finite norm
\begin{alignat*}{5} \norma{u}{\Hoe0\morex(\cDomRi)} = \norma{u}{\Linf(\DomR)} + \snorma{u}{\Hoe0\morex(\cDomRi)}  \end{alignat*} 
where
\begin{alignat*}{5} \snorma{u}{\Hoe0\morex(\cDomRi)} = \sup_{\xx_1,\xx_2\in\DomR,~\xx_1\neq\xx_2}  |u(\xx_1)-u(\xx_2)| 
|\xx_1-\xx_2|^{-\morex} \min\{|\xx_1|,|\xx_2|\}^{2\morex}
\end{alignat*}
(finiteness implies $\morex$-H\"older continuity, including ``at infinity'', in the sense 
of standard H\"older continuity after a coordinate change $\xx\mapsto|\xx|^{-2}\xx$). 

\begin{theorem}
  \label{th:inflocalmorrey}%
  Consider uniformly subsonic flows $\stf\in\Ctwo(\Dom\union\Slipb)$.
  For $R<\infty$ sufficiently large,
  \begin{alignat*}{5} \norma{\nabla\stf}{\Hoe0\morex(\cDomRi)} \leq \gC, \end{alignat*} 
  where $R$, $\morex\in\boi01$ and $\gC<\infty$ 
  depend only on $\Dom$, $\isenc$ and $\esssup\Mach<1$. 
\end{theorem}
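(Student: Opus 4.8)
The plan is to prove a decay-of-oscillation estimate for $\vv:=\nabla\stf$ as $|\xx|\conv\infty$ by a scaling reduction to a fixed ``nearly straight'' wedge, followed by a Campanato/De Giorgi--Nash iteration across dyadic scales. First I would rescale: for large $\rad$ set $\stf_\rad(\xx):=\rad^{-1}\stf(\rad\xx)$. Because the truncated divergence form $0=\ndiv\gmod(\nabla\stf)$ depends on $\stf$ only through $\nabla\stf$ (inside $\gmod$) and linearly through $\hess\stf$, one checks $\nabla\stf_\rad(\xx)=(\nabla\stf)(\rad\xx)$ and $0=\ndiv\gmod(\nabla\stf_\rad)$ on $\rad^{-1}\Dom$. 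Thus $\stf_\rad$ inherits the gradient bound $\half|\nabla\stf_\rad|^2\le\vmsmax<\vmssonic$, so $\gmod_\vm(\nabla\stf_\rad)$ is uniformly elliptic with constants depending only on $\isenc$ and $\esssup\Mach<1$. By \eqref{eq:prpola}, on the shell $\half<|\xx|<2$ the rescaled walls $\pola=\polaa(\rad|\xx|)$ deviate from the straight rays $\pola=\polaa(\infty)$ by $O(\rad^{-\polaeps})$ in $\Ctwo$, so $\rad^{-1}\Dom$ converges to the fixed straight wedge $\set{\polalo(\infty)<\pola<\polahi(\infty)}$ of opening $\Pola$. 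Everything is therefore reduced to one uniformly elliptic equation on an almost-straight wedge, uniformly in $\rad$.

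By Proposition \ref{prop:base-regularity}, $\stf\in\Ctwo(\sDom)$, so I may differentiate \eqref{eq:stf-divform}: each component of $\vv$ solves a linear divergence-form equation $0=\ndiv\big(\gmod_\vm(\nabla\stf)\nabla v_i\big)$ with bounded measurable, uniformly elliptic coefficients. On a wall $\stf=0$ forces the tangential derivative $\vs\dotp\vv$ to vanish, a homogeneous Dirichlet condition; after flattening the wall and exploiting that the \emph{isotropic} flux $\gmod(\vm)=\hmod(\half|\vm|^2)\vm$ is invariant under odd reflection across a straight boundary, boundary estimates reduce to interior ones, the wall curvature entering only as an $O(\rad^{-\polaeps})$ perturbation. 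Writing $T_\rho:=\Dom\isect\set{|\xx|>\rho}$, the De Giorgi--Nash oscillation-decay (growth) lemma, applied at each dyadic scale to the rescaled problem, then produces $\theta\in\boi01$ and $\morex\in\boi01$ depending only on the ellipticity constants and $\Pola$ (hence only on $\Dom,\isenc,\esssup\Mach$) with
\[ \operatorname{osc}_{T_{2\rho}}\vv \;\le\; \theta\,\operatorname{osc}_{T_\rho}\vv \;+\; \gC\,\rho^{-\polaeps} , \]
the error bounding the defect from exact straightness of the walls.

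Iterating this inequality over $\rho=2^kR$ gives $\operatorname{osc}_{T_{2^kR}}\vv\le\gC(2^kR)^{-\morex}$ (after replacing $\morex$ by $\min\set{\morex,\polaeps}$), so the oscillations are summable: $\vv$ has a single limit $\vv(\infty)$ at infinity and $|\vv(\xx)-\vv(\infty)|\le\gC|\xx|^{-\morex}$. This controls pairs at differing scales. For pairs at a common scale $\rad$ I would use the single-scale interior and boundary De Giorgi--Nash (Campanato) estimate for $\stf_\rad$, which bounds the local H\"older seminorm of $\vv$ at scale $\rad$ by $\gC\rad^{-\morex}\operatorname{osc}_{T_\rad}\vv\le\gC\rad^{-2\morex}$. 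Translating the two scaling factors into the weight $\min\set{|\xx_1|,|\xx_2|}^{2\morex}|\xx_1-\xx_2|^{-\morex}$---equivalently, ordinary H\"older continuity after the inversion $\xx\mapsto|\xx|^{-2}\xx$---assembles both cases into $\norma{\nabla\stf}{\Hoe0\morex(\cDomRi)}\le\gC$.

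The hard part is the boundary growth lemma of the second paragraph: obtaining the oscillation-reduction factor $\theta$ and exponent $\morex$ \emph{uniformly in $\rad$} up to the wedge vertex, while correctly handling the mixed (Dirichlet in the tangential, conormal in the normal component) boundary behaviour of $\vv$ on the curved walls and showing that the deviation from a straight wedge contributes only the summable $O(\rad^{-\polaeps})$ error. The decay rates in \eqref{eq:prpola} are exactly what make that error term summable and the limit $\vv(\infty)$ well defined.
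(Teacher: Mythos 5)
Your overall strategy --- rescale to a fixed nearly straight wedge, differentiate the equation so the components of $\vv=\nabla\stf$ solve linear divergence-form equations with bounded measurable coefficients, and iterate an oscillation-contraction estimate over dyadic scales --- is a recognized alternative to the paper's route, and several of your ingredients are sound: the rescaling $\stf_\rad$, the odd-reflection treatment of a \emph{single} wall using the isotropy of $\gmod$, and the final conversion of the two-scale decay into the weighted norm $\Hoe0\morex(\cDomRi)$. But there is a genuine gap exactly where you place ``the hard part'': the growth lemma $\operatorname{osc}_{T_{2\rho}}\vv\le\theta\operatorname{osc}_{T_\rho}\vv+\gC\rho^{-\polaeps}$ is asserted to be ``produced'' by De Giorgi--Nash, and it is not. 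The set $T_\rho$ touches \emph{both} walls, and the two walls impose homogeneous Dirichlet conditions on two \emph{different} linear combinations of the components of $\vv$ (namely $\vs^0\dotp\vv$ on $\lam=0$ and $\vs^1\dotp\vv$ on $\lam=1$); no single scalar solution of the differentiated equation vanishes on all of $\bdry T_\rho\isect\Slipb$. Odd reflection handles one wall at a time, but the two reflections are incompatible near the corner at infinity unless $\Pola$ is a submultiple of $\pi$, so the corner cannot be turned into an interior point. Nor can the contraction be recovered by covering the annular sector with interior balls and half-balls and chaining the local oscillation-decay estimates: chaining $N$ overlapping regions yields a factor $N\theta$, which need not be below $1$. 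Applying the growth lemma to $\vs^0\dotp\vv$ and $\vs^1\dotp\vv$ separately does not close either, because each of these combinations satisfies only an uncontrolled oblique condition on the \emph{other} wall. So the uniform $\theta\in\boi01$ for the full vector $\vv$ on a region reaching both walls --- the one statement from which everything else in your argument follows --- is left unproved, and it is precisely where the whole difficulty of the theorem is concentrated.

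The paper's proof is organized around a device aimed at exactly this obstruction: it replaces $\ff=\mirr\nabla\stf$ by the boundary-adapted map $\ffs=\Bv\ff$, whose component $\fz$ vanishes on the $\lam=0$ wall and whose component $\fo$ vanishes on the $\lam=1$ wall, and then exploits the two-dimensional quasiconformal structure $|\pLam\ffs|^2\le\gC\big(\Det\pLam\ffs+e^{-2\polaeps\lrv}\big)$ so that the Dirichlet integral $\int\Det\pLam\ffs=\int d\fz\wedge d\fo$ reduces by Stokes' theorem to boundary terms in which, on each wall, one uses whichever of $\fz,\fo$ vanishes there. The resulting differential inequality for the Dirichlet integral, followed by the Morrey--Campanato iteration on squares, is what substitutes for your missing growth lemma. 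To complete your route you would have to prove the sector growth lemma directly --- for instance by a compactness/blow-up comparison with the linear straight-wedge problem or by barriers adapted to the opening angle $\Pola$ --- and that is a substantial construction, not an application of an off-the-shelf estimate.
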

\begin{remark}
  The result also holds if $\Dom,\DomR$ are neighbourhoods of infinity, by substituting slip conditions with periodic boundary conditions
  in the following discussion,
  with obvious modifications. 
\end{remark}

\paragraph{Quasiconformal maps}

To prove this theorem we 
begin by reviewing basics about quasiconformal maps. While there is an extensive theory \cite{ahlfors-lectures-quasiconformal}, 
we only use the part relevant to quasilinear elliptic PDE in the plane, based on the classical work of Morrey \cite{morrey-1938} and subsequent authors
(see \cite[Chapter 12 and section 13.2]{gilbarg-trudinger} for historical remarks). 

For a matrix $A$, we denote by $|A|$ the operator norm induced by the Euclidean norm on vectors.
\begin{definition}
  Let $U\subset\R^2$ be an open set. 
  A $\Cone$ map $\map{\ff}{U}{\R^2}$ is called \defm{quasiconformal}\footnote{%
    A \defm{conformal} map satisfies \eqref{eq:quasiconf} with $\gC=1$; 
    in that case $\Det\ff'$ is called \defm{conformal factor}. 
    The definition implies $\Det\geq 0$ ($\Det$ can touch $0$ but not change sign altogether), 
    i.e.\ $\ff$ is orientation-preserving, which is not essential here.
    Other texts use the Frobenius norm instead of the Euclidean operator norm; 
    they are equivalent and we are not concerned with optimal values of $\gC$ or $\morex$. 
  } on $U$
  if there is a constant $\quaf\in\boi0\infty$ so that for all $\xx\in U$ 
  \begin{alignat}{5} |\ff'(\xx)|^2 \leq \quaf \Det\ff'(\xx)   \label{eq:quasiconf} \end{alignat} 
\end{definition}

\begin{proposition}
  \label{prop:composition-quasi}%
  The composition of quasiconformal maps is quasiconformal. 
\end{proposition}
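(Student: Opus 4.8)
The plan is to reduce the statement to two elementary, pointwise facts about $2\times2$ matrices, combined through the chain rule. Suppose $\map{\ff}{U}{\R^2}$ is quasiconformal with constant $\quaf$ and $\map{\fg}{V}{\R^2}$ is quasiconformal with constant $\quasic_\fg$, where $\ff(U)\subseteq V$. Then $\fg\circ\ff$ is defined on the open set $U$ and, as a composition of $\Cone$ maps, is itself $\Cone$, so the definition of quasiconformality applies to it; it remains only to produce a suitable constant.

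First I would differentiate by the chain rule, $(\fg\circ\ff)'(\xx)=\fg'(\ff(\xx))\,\ff'(\xx)$ for $\xx\in U$. The two ingredients I then invoke are submultiplicativity of the operator norm, $|AB|\leq|A|\,|B|$, and multiplicativity of the determinant, $\Det(AB)=\Det A\,\Det B$; both are standard. Chaining these with \eqref{eq:quasiconf} applied to each factor gives, at every $\xx\in U$,
\begin{alignat*}{5}
|(\fg\circ\ff)'(\xx)|^2
&\leq |\fg'(\ff(\xx))|^2\,|\ff'(\xx)|^2
\leq \quasic_\fg\,\quaf\,\Det\fg'(\ff(\xx))\,\Det\ff'(\xx)
\\&= \quaf\,\quasic_\fg\,\Det\big(\fg'(\ff(\xx))\,\ff'(\xx)\big)
= \quaf\,\quasic_\fg\,\Det\big((\fg\circ\ff)'(\xx)\big),
\end{alignat*}
so $\fg\circ\ff$ satisfies \eqref{eq:quasiconf} with constant $\quaf\,\quasic_\fg$.

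There is no genuine obstacle here: the only point worth a moment's thought is that multiplying the two scalar inequalities is legitimate, which holds because both left-hand sides $|\fg'|^2$ and $|\ff'|^2$ are nonnegative (indeed the definition forces $\Det\ff',\Det\fg'\geq0$, as noted in the footnote to the definition). The argument is entirely pointwise and uniform in $\xx$, so it extends verbatim to any finite composition, the resulting constant being the product of the individual constants.
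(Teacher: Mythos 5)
Your argument is correct and is essentially identical to the paper's proof: both apply the chain rule, submultiplicativity of the operator norm, and multiplicativity of the determinant to obtain the constant $\quaf\quasic_\fg$ (the paper merely writes the composition in the other order). The extra remark about nonnegativity justifying the multiplication of the two inequalities is a harmless added detail.
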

\begin{proof}
  Let $\fg$ be quasiconformal on $U$, $\ff$ quasiconformal on a  superset of $\fg(U)$. Then
  \begin{alignat*}{5} |(\ff\circ\fg)'|^2 &= |(\ff'\circ\fg)\fg'|^2 \leq |\ff'\circ\fg|^2 |\fg'|^2
  \leq \quaf \Det(\ff'\circ\fg) \quasic_\fg \Det\fg'
  \\&= \quaf\quasic_\fg \Det((\ff\circ\fg)')
  \end{alignat*} 
  so $\ff\circ\fg$ is quasiconformal as well.
\end{proof}

The relationship between quasiconformal maps and elliptic PDE is established by the following observation. 
Let 
\begin{alignat*}{5} \mirr = \begin{bmatrix}
  1 & 0 \\
  0 & -1
\end{bmatrix}. \end{alignat*}
\begin{proposition}
  \label{prop:quasiconformal-elliptic}%
  If a $\Ctwo$ map $\map{u}{U}{\R}$ solves a scalar second-order PDE 
  \begin{alignat*}{5} 0 = A(\xx):\hess u \quad\text{in $U$} \end{alignat*} 
  which is uniformly elliptic, then the map $\ff=M\nabla_\xx u$ is quasiconformal. 
\end{proposition}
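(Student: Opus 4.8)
The plan is to exploit the special two-dimensional algebra of symmetric matrices, reducing the defining inequality \eqref{eq:quasiconf} for $\ff=\mirr\nabla u$ to a statement about a single trace-free symmetric matrix. Since $\hess u$ is symmetric, only the symmetric part of $A$ enters $A:\hess u$, so I may assume $A=A^T$ without loss of generality. I write $\mineig(\xx),\maxeig(\xx)$ for the smallest and largest eigenvalues of $A(\xx)$; uniform ellipticity then means precisely that the ratio $\maxeig(\xx)/\mineig(\xx)$ is bounded by a constant independent of $\xx$.

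First I would record the elementary effect of the reflection. Writing $H=\hess u$, one has $\ff'=\mirr H$; since $\mirr$ is orthogonal, the operator norm is unchanged, $|\ff'|=|H|$, while $\Det\ff'=\Det\mirr\cdot\Det H=-\Det H$. Thus \eqref{eq:quasiconf} becomes the assertion that $|H|^2\leq\quaf\,(-\Det H)$, which in particular will require $\Det H\leq 0$.

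The key step is to diagonalize the constraint by conjugation. Put $B=A^{1/2}$ (symmetric positive definite) and $\tilde H=BHB$. Then $\trace\tilde H=\trace(AH)=A:H=0$, so $\tilde H$ is a trace-free symmetric $2\times 2$ matrix; its eigenvalues are $\pm\mu$ for some $\mu\geq 0$, whence $|\tilde H|=\mu$ and $-\Det\tilde H=\mu^2$, i.e.\ $|\tilde H|^2=-\Det\tilde H$ --- the exact conformal identity. Transferring back through $H=B^{-1}\tilde HB^{-1}$ gives $|H|^2\leq|B^{-1}|^4|\tilde H|^2$, while $-\Det H=-\Det\tilde H/\Det A=|\tilde H|^2/\Det A$. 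Combining these, and using $|B^{-1}|^2=\mineig(\xx)^{-1}$ together with $\Det A=\mineig(\xx)\maxeig(\xx)$,
\[ |H|^2\leq\frac{\Det A}{\mineig(\xx)^2}(-\Det H)=\frac{\maxeig(\xx)}{\mineig(\xx)}(-\Det H). \]
Hence \eqref{eq:quasiconf} holds pointwise with $\quaf=\maxeig(\xx)/\mineig(\xx)$, and uniform ellipticity bounds this ratio uniformly in $\xx$, giving the single constant $\quaf$ in the definition.

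The only point requiring care is the degenerate locus where $\Det\ff'=0$: there $\Det\tilde H=0$ forces $\mu=0$, hence $\tilde H=0$, and so $H=0$ and $\ff'=0$, so both sides of \eqref{eq:quasiconf} vanish and the inequality is trivially satisfied. This is exactly where a naive attempt to bound $|H|^2/(-\Det H)$ directly would appear to break down, so the conjugation device --- which simultaneously shows $\Det H\leq 0$ and that $\Det H=0$ forces $H=0$ --- is what makes the argument go through. I expect this bookkeeping, rather than any deep estimate, to be the main thing to get right.
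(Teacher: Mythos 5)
Your proof is correct, but it takes a genuinely different route from the paper's. The paper diagonalizes the \emph{Hessian}, $\hess u=\sum_i\lambda_i v_i^2$, and uses $0=\sum_i\lambda_i v_i^TAv_i$ together with the two-sided bound on the ratio $v_1^TAv_1/v_2^TAv_2$ to conclude that $\lambda_1,\lambda_2$ have opposite signs and comparable magnitudes, whence $|\hess u|^2=\max_i|\lambda_i|^2\leq\elli^{-1}|\lambda_1||\lambda_2|=-\elli^{-1}\Det\hess u$. You instead conjugate by $A^{1/2}$, reducing to the trace-free case where the conformal identity $|\tilde H|^2=-\Det\tilde H$ is exact, and then transfer the inequality back through the norms of $B^{-1}$. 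Both arguments land on the same constant $\quaf=\maxeig/\mineig=1/\elli$, and both degenerate gracefully when $\hess u=0$. What your conjugation device buys is a cleaner separation of the algebra (the trace-free normal form) from the estimate, and an explicit treatment of the locus $\Det\ff'=0$; what the paper's eigenvalue-ratio argument buys is that it avoids matrix square roots and makes transparent, via its footnote, exactly why the statement is special to two dimensions (in dimension $n\geq 3$ the relation $0=\sum_i\lambda_i v_i^TAv_i$ only forces the two largest eigenvalue magnitudes to be comparable). One minor point worth making explicit in your write-up: replacing $A$ by its symmetric part is harmless because uniform ellipticity is a statement about the quadratic form $\xi\mapsto\xi^TA\xi$, which only sees the symmetric part, so $\mineig,\maxeig$ are well defined and uniformly comparable as you use them; in the application $A=I-(\vv/\ssnd)^2$ is symmetric anyway.
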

\begin{proof}
  Consider any $\xx\in U$. 
  $\hess u$ is real symmetric, so 
  \begin{alignat*}{5} \hess u = \sum_{i=1,2} \lambda_i v_i^2 \end{alignat*} 
  for real orthonormal eigenvectors $v_i$ and real eigenvalues $\lambda_i$. Hence
  \begin{alignat*}{5} 0 =& A:\hess u
  = 
  \sum_{i=1,2} \lambda_i v_i^T A v_i 
  \\\eqv\quad
  & 
  -\lambda_1 \frac{v_1^T Av_1}{v^T_2Av_2} = \lambda_2 \end{alignat*} 
  The fraction is in $\cli{\elli}{1/\elli}$, where $\elli\in\loi01$ is the ellipticity, 
  so $\sign\lambda_2=-\sign\lambda_1$ and\footnote{The argument fails in dimensions three and higher, since
    $0=\lambda_1 v_1\dotp Av_1 + ... + \lambda_n v_n\dotp Av_n$
    requires only the largest two of $n$ eigenvalue magnitudes to be comparable.
  }
  \begin{alignat*}{5} 
  |\lambda_1| \leq \frac1\elli |\lambda_2| 
  \csep
  |\lambda_2| \leq \frac1\elli |\lambda_1| 
  \end{alignat*} 
  Therefore
  \begin{alignat*}{5} |\hess\stf|^2 = \max\{|\lambda_1|^2,|\lambda_2|^2\} \leq \frac1\elli |\lambda_1||\lambda_2| = - \frac1\elli \Det\hess\stf
  \overset{\Det M=-1}{=} \frac1\elli \Det(\nabla_\xx\ff). \end{alignat*} 
\end{proof}

\newcommand{\gdel}{\delta}

\begin{proof}[Proof of Theorem \ref{th:inflocalmorrey}]
  Our $\stf$ solves
  \begin{alignat*}{5} 0 \topref{eq:stf-2d}{=} \big(I-(\frac{\vv}{\ssnd})^2\big):\nabla^2\stf \end{alignat*} 
  which is uniformly elliptic since we assumed uniformly subsonic $\stf$.
  Hence
  \begin{alignat*}{5} |\ff|^2 \leq \gC \Det\ff \quad\text{on $\Dom$;} \end{alignat*} 
  here and henceforth constants $\gC\in\boi0\infty$ may depend on $\Dom$, $\isenc$ and $\esssup\Mach$ but nothing else;
  each instance of $\gC$ may be a different constant.

  \paragraph{Change to log-polar and straight strip}

  For the following it is convenient to change to log-polar coordinates $\Lrv=(\lrv,\pola)$ where $\lrv = \log\rad$,
  and then to define a a new coordinate $\lam\in\boi01$ by
  \begin{alignat*}{5} \pola = \lam\polahi + (1-\lam)\polalo \end{alignat*} 
  so that coordinates $\Lam=(\lrv,\lam)$ lie in the \emph{straight}-boundary semi-infinite strip 
  $\boi{\lrvmin}{\infty}\cartp\boi01$. We will increase $\lrvmin<\infty$ a finite number of times,
  with ultimate value depending only on $\Dom$, $\isenc$ and $\esssup\Mach$.
  
  $\Lrv\mapsto\xx$ is a conformal change of coordinates, so by Proposition \ref{prop:composition-quasi} 
  $\Lrv\mapsto\ff$ is again quasiconformal. 
  
  The Jacobian of $\Lam=(\lrv,\lam)\mapsto(\lrv,\pola)=\Lrv$ is 
  \begin{alignat*}{5} \pLam\Lrv = \begin{bmatrix}
    1 & 0 \\
    \lam\pl\polahi+(1-\lam)\pl\polalo & \polahi-\polalo
  \end{bmatrix}. \end{alignat*} 
  The assumption \eqref{eq:prpola} about the boundary yields for $k=1,2,3$ that 
  \begin{alignat}{5} |\pl^k\polaa| = |(\rad\pr)^k\polaa| \leq C\sum_{j=1}^k \rad^j|\pr^j\polaa| 
  \topref{eq:prpola}{\leq} 
  Ce^{-\polaeps\lrv} .
  \label{eq:pl-polaa} \end{alignat} 
  In particular integration of the $k=1$ case yields that 
  \begin{alignat*}{5} \polaa = \const + O(e^{-\polaeps\lrv}) \quad(\lrv\conv\infty) \end{alignat*} 
  so that $\polahi-\polalo$ is continuous at infinity, with $\Pola=\polahi(\infty)-\polalo(\infty)>0$ by assumption. 
  Hence the Jacobian converges to 
  \begin{alignat*}{5} \begin{bmatrix}
    1 & 0 \\
    0 & \Pola
  \end{bmatrix} \end{alignat*} 
  which is invertible, 
  so for $\lrv$ in $\boi{\lrvmin}{\infty}$ with $\lrvmin<\infty$ sufficiently larger
  $\Lam\mapsto\Lrv$ is quasiconformal, and by composition 
  (Proposition \ref{prop:composition-quasi}) $\Lam\mapsto\ff$ is quasiconformal.

  \paragraph{Boundary-adapted quasiconformal map}

  We pass from $\ff$ to another quasiconformal map $\ffs$ that corresponds directly\footnote{This trick is similar to \cite{lieberman-pacj-1988} which provides another approach to corner regularity.} to the boundary conditions. 
  For $i=0,1$ we define a vector field $\vs^i$ tangent to the $\lam=i$ half of the slip boundary $\Slipb$, more precisely
  \begin{alignat*}{5} \vs^i(\lrv,\lam) 
  \defeq
  \begin{bmatrix}
    \cos\polaa(\lrv) \\
    \sin\polaa(\lrv)
  \end{bmatrix} 
  + 
  \pl\polaa(\lrv)
  \begin{bmatrix}
    -\sin\polaa(\lrv) \\
    \cos\polaa(\lrv)
  \end{bmatrix} .
  \end{alignat*}
  Let $\Bv$ be the matrix with columns $\mirr\vsz,\mirr\vso$;
  by $\pl\polaa\conv 0$ as $\lrv\conv\infty$ we have $\Det\Bv=-(\cos\polalo,\sin\polalo)\crossp(\cos\polahi,\sin\polahi)=-\sin\Pola>0$. 
  Our assumptions \eqref{eq:prpola} yield, after increasing $\lrvmin$ if necessary, that $\Bv$ is invertible for $\lrv\in\roi{\lrvmin}{\infty}$ with
  \begin{alignat*}{5} |\Bv|,|\Bv|^{-1}\leq \gC \csep |\pLam\Bv|,|\pLam(\Bv^{-1})| \topref{eq:pl-polaa}{\leq} \gC e^{-\polaeps\lrv} . \end{alignat*} 
  Set
  \begin{alignat*}{5} \ffs = (\fz,\fo) \defeq \Bv\ff, \end{alignat*} 
  then we achieve the main purpose of $\ffs$: for $i\in\set{0,1}$
  \begin{alignat*}{5} \fs^i &= (\mirr\vs^i)\dotp(\mirr\nabla\stf) = \vs^i\dotp\nabla\stf = 0 \quad\text{on the $\lam=i$ part of $\Slipb$.} \end{alignat*}
  $\ffs$ satisfies 
  \begin{alignat*}{5}
  |\ffs|\leq|\Bv||\ff|\leq\gC
  \end{alignat*}
  and inherits a form of quasiconformality:
    \begin{alignat*}{5} |\pLam\ffs|^2 &= |(\pLam\Bv)\ff+\Bv\pLam\ff|^2 
  \\&\leq \gC ( e^{-2\polaeps\lrv}+|\pLam\ff|^2 ) 
  \\&\leq \gC ( e^{-2\polaeps\lrv} + \Det\pLam\ff )
  \\&= \gC \Big( e^{-2\polaeps\lrv} + \Det\big(\Bv^{-1}\pLam\ffs+\subeq{\pLam(\Bv^{-1})}{\leq\gC e^{-\polaeps\lrv}}\subeq{\ffs}{\leq\gC}\big) \Big)
  \intertext{(continuity of $\Det$ as a function of the matrix)}
  &\leq \gC ( e^{-2\polaeps\lrv} + \Det\pLam\ffs)
  \end{alignat*} 
  for $\lrvmin<\infty$ sufficiently larger.

  If the boundaries were straight (or even periodic)
  the exponential would be absent;
  the reader may wish to ignore it at first reading.

  \paragraph{Dirichlet integral}

  Integrate over a domain $\carect$ that is a cartesian rectangle in $\lrv,\lam$ coordinates:
  \begin{alignat}{5} \int_\carect |\pLam\ffs|^2 - \gC e^{-2\polaeps\lrv} d\Lam 
  &\leq \gC
  \int_\carect \Det \frac{\partial(\fz,\fo)}{\partial(\lrv,\lam)} d\lrv\wedge d\lam
  \notag\\&\leq
  \gC  \int_\carect \subeq{d\fz \wedge d\fo}{=d(\fz d\fo)=d(-\fo d\fz)} \label{eq:fork0}
  \\&=
  \gC  \int_{\bdry \carect} \fz d\fo 
  \label{eq:forka}
  \intertext{(use $\fz=0$ or $\fo=0$ on $\Sigb$)}
  \label{eq:fork1}
  &=
  \gC  \int_{\Sigi} \fz d\fo 
  \intertext{(Cauchy-Schwarz inequality)}
  &\leq 
  \gC
  \Big( \int_{\Sigi} |\fz|^2 ds \Big)^{\half} \Big( \int_{\Sigi} |\ps\fo|^2 ds \Big)^{\half} 
  \label{eq:fork2}
  \intertext{(use $|\fz|\leq\gC$)}
  &\leq 
  \gC |\Sigi|^{\half} \Big(\int_{\Sigi} |\pLam\ffs|^2 ds \Big)^{\half}. \notag\end{alignat} 
  It is also necessary to obtain this estimate with exponent $1$ instead of $\half$, as follows. Case 1: if $\bdry\carect$ does not meet $\Slipb$, then we use $\int_{\bdry\carect}d\fo=0$ in \eqref{eq:forka} to insert
  an arbitrary constant $c$:
  \begin{alignat*}{5}
  ...
  &=
  \gC  \int_{\bdry \carect} (\fz-c) d\fo 
  \intertext{(Cauchy-Schwarz inequality)}
  &\leq 
  \gC \Big( \int_{\bdry \carect} (\fz-c)^2 d\fo \Big)^{\half} \Big( \int_{\bdry \carect} |\ps\fo|^2 ds \Big)^{\half}
  \intertext{(Poincar\'e-type inequality for the first integral, minimizing over $c$)}
  &\leq 
  \gC |\bdry\carect| \int_{\bdry \carect} |\pLam\ffs|^2 ds .
  \end{alignat*}
  Case 2: if the rectangle $\bdry\carect$ meets the $\lam=0$ side of $\Sigb$, 
  then $\fz=0$ at both endpoints of $\Sigi$ if the $\lam=1$ side is not met,
  or at one endpoint of each of the two line segments composing $\Sigi$ if the $\lam=1$ side is also met,
  so we may use another Poincar\'e-type inequality to estimate \eqref{eq:fork2} as 
  \begin{alignat*}{5} 
  ... 
  \leq
  \gC |\Sigi| \int_{\Sigi} |\pLam\ffs|^2 ds .
  \end{alignat*} 
  Case 3: if $\bdry\carect$ meets only the $\lam=1$ side, then we may use $d(-\fo d\fz)$
  instead of $d(\fz d\fo)$ from \eqref{eq:fork0} onward to obtain an analogous estimate.
  Altogether we have obtained
  \begin{alignat}{5} 
  \int_\carect |\pLam\ffs|^2 - \gC e^{-2\polaeps\lrv} d\Lam 
  &\leq
  \gC 
  \big( |\Sigi| \int_{\Sigi} |\pLam\ffs|^2 ds \big)^{1/\bx} \quad(\bx\in\set{1,2}).
  \label{eq:allcombined}
  \end{alignat}

  \paragraph{Boundedness and decay}

  For 
  \begin{alignat*}{5} \carect \defeq \cli{\lrvlo}{\lrvhi} \cartp \cli01  \quad(\lrvmin \leq \lrvlo \leq \lrvhi < \infty) \end{alignat*} 
  and
  \begin{alignat*}{5} J(\lrv) \defeq \int_0^1 |\pLam\ffs(\lrv,\lam)|^2 d\lam  \end{alignat*}
  \eqref{eq:allcombined} specializes for each $\bx$ to 
  \begin{alignat}{5}
  \supeq{ \int_{\lrvlo}^{\lrvhi} J ~d\lrv }{\eqdef \Dir}
  - 
    \gC
    \int_{\lrvlo}^{\lrvhi} e^{-2\polaeps\lrv} d\lrv
  &\leq 
  \gC
  \big(
  J(\lrvlo)^{1/\bx}
  + 
  J(\lrvhi)^{1/\bx} 
  \big)
  \label{eq:dirdir} 
  \\\impl\quad
  \Dir 
  - 
  \subeq{ 
    \gC
    \big( \int_{\lrvlo}^{\infty} e^{-2\polaeps\lrv} d\lrv + J(\lrvlo)^{1/\bx} \big)
  }{=T=\text{constant in $\lrvhi$}}
  &\leq
  (\frac{\partial\Dir}{\partial\lrvhi})^{1/\bx}
  =
  (\frac{\partial(\Dir-T)}{\partial\lrvhi})^{1/\bx}
  \notag
  \end{alignat}
  If $\Dir>T$ at some $\lrvhi$, and if $\bx>1$, then we have a classical case of blowup at a finite larger $\lrvhi$. 
  But $\pLam\ffs$ and hence $\Dir$ are defined and smooth for all $\lrvhi$ up to $\infty$ --- contradiction.

  Hence $\Dir$ is bounded as $\lrvhi\upconv\infty$,
  and since $J\geq 0$ that means the $\liminf$ of $J$ must be $0$. 
  Hence we may take that $\liminf$ in \eqref{eq:dirdir} to obtain
  \begin{alignat*}{5}
  \supeq{\int_{\lrvlo}^\infty J(\lrv)d\lrv}{\eqdef\Diri}
  - 
  \gC
  \int_{\lrvlo}^{\infty} e^{-2\polaeps\lrv} d\lrv
  &\leq 
  \gC
  J(\lrvlo)^{1/\bx}
  =
  (\frac{\partial\Diri}{\partial(-\lrvlo)})^{1/\bx}
  \\
  \impl\quad
  \Diri
  - 
  \subeq{ 
    \gC
    \int_{\lrvmin}^{\infty} e^{-2\polaeps\lrv} d\lrv
  }{=T_2=\text{const in $\lrvhi$}}
  &\leq 
  \gC
  (\frac{\partial(\Diri-T_2)}{\partial(-\lrvlo)})^{1/\bx}
  \end{alignat*}
  Again if $\bx>1$, namely $\bx=2$, if $\Diri>T_2$ at some $\lrvlo$ then we have blowup as $\lrvlo$ \emph{de}creases,
  except if the domain ends before then, at $\lrvlo=\lrvmin$.
  Integrating from a finite $\Diri$ there to larger $\lrvlo$ we obtain
  \begin{alignat*}{5} \Diri \leq \max\{T_2,\gC\frac1{\lrv-\lrvmin}\}. \end{alignat*} 
  In particular, after increasing $\lrvmin$ by an amount $\leq\gC$ for the remainder of the proof, we may use
  \begin{alignat*}{5} \Diri &\leq T_2 \leq \gC \qquad (\lrvmin\leq\lrvlo<\infty). \end{alignat*} 
  (The earlier bound $T$ was dependent on $J$, unlike $T_2$.)

  Finally, for $\bx=1$ instead, using $\Diri\leq\gC$ at $\lrvlo=\lrvmin$ we obtain
  \begin{alignat}{5} \Diri &\leq \gC e^{-2\morex\lrvlo}  \qquad (\lrvmin\leq\lrvlo<\infty)    \label{eq:Diri-est}\end{alignat} 
  where $2\morex$ depends (through $\gC,\polaeps,\lrvmin$) only on $\Dom,\isenc,\esssup\Mach$.

\paragraph{Dirichlet integrals on squares}

  \begin{figure}
    \input{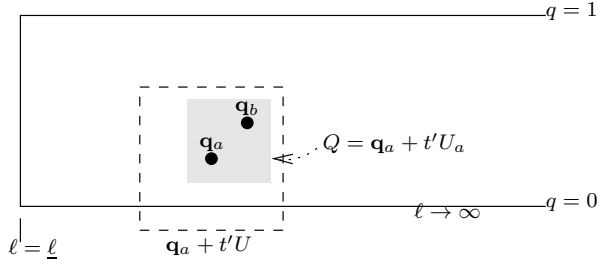}
    \caption{Comparing averages by Morrey estimates}
    \label{fig:slidingrect}
  \end{figure}

  Now we consider integral averages over expanding squares (see fig.\ \ref{fig:slidingrect}).
  Let 
  \begin{alignat*}{5} \Lama &= (\lrva,\lama)\in\roi{1+\lrvmin}{\infty}\cartp\cli01, \\
  \finrect &= \cli{-1}{1}\cartp\cli{-1}{1} \end{alignat*}
  As $t$ increases from $0$ to $1$, $\Lama+t\finrect$ starts to intersect with the $\lam=0,1$ slip boundary at some $t_\lam$; set
  \begin{alignat*}{5} \carect &= (\Lama+t\finrect)\isect(\roi{\lrvmin}{\infty}\cartp\cli01). \end{alignat*}
  \eqref{eq:allcombined} yields\footnote{Note that $\bdry\carect$ ``moves'' with ``normal speed'' $1$ by choice of $\finrect$.}
  \begin{alignat}{5} 
  \Dirt(t)
  \defeq
  \int_{\carect} |\pLam\ffs|^2 d\Lam 
  &\leq
  \gC 
  \big(
  \subeq{|\carect|}{\leq\gC t^2} e^{-2\polaeps\lrva}
  +
  \subeq{|\Sigi|}{\leq\gC t}
  \int_{\Sigi} |\pLam\ffs|^2 ds 
  \big)
  \notag\\&\leq
  \gC
  t^2 e^{-2\polaeps\lrva}
  +
  \gC t \pd t \Dirt(t)
  \label{eq:Dirt-ineq}
  \end{alignat} 
  for all $t\in\loi01$ 
  except possibly at $t_\lam$, where $\Dirt$ is at least Lipschitz,
  so we may integrate \eqref{eq:Dirt-ineq} to obtain
  (with $2\morex\in\boi01$ determined from $\polaeps,\gC$ and the $\morex$ in \eqref{eq:Diri-est}): 
  \begin{alignat*}{5} \Dirt(t) 
  &\leq 
  t^{2\morex} \cdot \big( \subeq{ \Dirt(1)}{\leq\Diri(\lrva-1)} + \gC e^{-2\morex\lrva} \big)
  \\&  \topref{eq:Diri-est}{\leq} 
  \gC t^{2\morex} e^{-2\morex\lrva} 
  \quad\text{for $0<t\leq1$.} 
  \end{alignat*} 
  Any square $\QQ\subset\roi{\lrvmin}{\infty}\cartp\cli01$ of sidelength $t\in\loi01$ is contained in one of these $\carect$, so
  \begin{alignat*}{5} \int_{\QQ} |\pLam\ffs|^2 d\Lam   \leq \gC t^{2\morex} e^{-2\morex\lrva}
  \end{alignat*}

  \paragraph{H\"older continuity}

  Let $\Lama\in\QQ$ arbitrary; since $\QQ$ has sidelength $t$, we may write $\QQ=\Lama+t\TT$ for a square $\TT\subset\finrect$.
  Then for $t'\in\cli0t$ the last inequality yields
  \begin{alignat}{5} \int_{\Lama+t'\TT} |\pLam\ffs|^2 d\Lam   \leq \gC (t')^{2\morex} e^{-2\morex\lrva} 
  \label{eq:QQestb}
  \end{alignat} 
  We use this estimate to compare integral averages $\avgint_\QQ=|\QQ|^{-1}\int_\QQ$: 
  \begin{alignat*}{5} 
  \Big| \avgint_{\Lama+t\TT} \ffs(\Lam) d\Lam - \ffs(\Lama) \Big|
  &=
  \Big| \avgint_{\TT} \ffs(\Lama+t\vu) - \ffs(\Lama+0\vu) d\vu \Big|
  \\&\leq
  \int_0^t \Big| \avgint_{\TT } \pLam\ffs(\Lama+t'\vu)\dotp\vu ~d\vu\Big|dt'
  \\&\leq
  \int_0^t
  \Big( \avgint_{\TT } |\pLam\ffs(\Lama+t'\vu)|^2 \Big)^{\half}
  \subeq{\Big( \avgint_{\TT } |\vu|^2 ~d\vu\Big)^{\half}}{\leq\gC}
  dt'
  \\&\leq
  \gC
  \int_0^t
  \Big( \avgint_{\Lama+t'\TT} |\pLam\ffs(\Lam)|^2 d\Lam \Big)^{\half} 
  dt'
  \\&\leq
  \gC
  \int_0^t 
  \Big( (t')^{-2} \subeq{ \int_{\Lama+t'\TT } |\pLam\ffs(\Lam)|^2 d\Lam 
  }{
    \topref{eq:QQestb}{\leq} \gC (t')^{2\morex} e^{-2\morex\lrva} 
  }
  \Big)^{\half} 
  dt'
  \\&\leq
  \gC
  t^{\morex} e^{-\morex\lrva}
  \end{alignat*} 
  For any other $\Lamb\in\QQ$ we have the same estimate
  \begin{alignat*}{5} 
  \Big| \avgint_{\QQ } \ffs(\Lam) d\Lam - \ffs(\Lamb) \Big|
  &\leq
  \gC
  t^{\morex} e^{-\morex\lrvb}. 
  \end{alignat*}
  Hence, for any given $\Lama,\Lamb\in\roi{\lrvmin}{\infty}\cartp\cli01$ with distance $t=|\Lama-\Lamb|\leq 1$ 
  we can choose a containing $Q$ with sidelength $t$ to get 
  \begin{alignat*}{5} 
  |\ffs(\Lama)-\ffs(\Lamb)| 
  &\leq 
  \gC t^\morex ( e^{-\morex\lrva} + e^{-\morex\lrvb}) 
  \leq 
  \gC |\Lama-\Lamb|^\morex e^{-\morex\min\set{\lrva,\lrvb}} 
  \end{alignat*} 
  For distances above $1$, 
  using a chain of such squares a geometric series generalizes the estimate to the entire strip $\roi{\lrvmin}{\infty}\cartp\cli01$.
  In particular $\ffs(\lrv,\lam)$ converges as $\lrv\conv\infty$, 
  and conversion from $\Lam$ to $\xx$ yields the claim of Theorem \ref{th:inflocalmorrey}. 
\end{proof}

\section{Conclusion}
\label{section:conclusion}%

\begin{proof}[Proof of Theorem \ref{th:infangle}]
  Theorem \ref{th:inflocalmorrey} shows that $\nabla\stf(\xx)\conv\nabla\stf(\infty)$ as $|\xx|\conv\infty$. 
  The slip condition yields $\nabla\stf(\xx)\dotp\vs(\xx)=0$ on each side of $\Slipb$, 
  where $\vs$ are continuous unit tangent fields. 
  The limits of $\vs$ enclose an angle $\Pola\notin\set{0,\pi,2\pi}$, 
  so they are linearly independent; 
  hence the limit of the boundary conditions yields that $\nabla\stf(\infty)=0$. 
\end{proof}
\noindent The proof of Theorem \ref{th:infangle-incomp} is analogous, except that in the incompressible case
boundedness of the velocity must be assumed explicitly.

\section*{Acknowledgement}

This material is based upon work partially supported by the
National Science Foundation under Grant No.\ NSF DMS-1054115
and by Taiwan MOST grant 105-2115-M-001-007-MY3.

\providecommand{\bysame}{\leavevmode\hbox to3em{\hrulefill}\thinspace}
\providecommand{\MR}{\relax\ifhmode\unskip\space\fi MR }
\providecommand{\MRhref}[2]{%
  \href{http://www.ams.org/mathscinet-getitem?mr=#1}{#2}
}
\providecommand{\href}[2]{#2}

\end{document}